\documentclass{article}

\usepackage[english]{babel}
\usepackage[utf8]{inputenc}
\usepackage{times}
\usepackage[T1]{fontenc}

\usepackage{amsmath}
\usepackage{amssymb}
\usepackage{amsthm}
\usepackage{mathrsfs}
\usepackage[all]{xy}
\usepackage{enumerate}

\theoremstyle{plain}
\newtheorem{thm}{Theorem}[section]
\newtheorem{prop}[thm]{Proposition}

\newtheorem{conj}[thm]{Conjecture}

\theoremstyle{definition}
\newtheorem{dfn}{Definition}[section]

\theoremstyle{remark}

\newcommand{\bbP}{{\mathbb P}}
\newcommand{\G}{{\mathbb G}}

\newcommand{\cH}{{\mathcal H}}
\newcommand{\calH}{{\mathcal H}}

\newcommand{\cM}{{\mathcal M}}
\newcommand{\calM}{{\mathcal M}}
\newcommand{\cO}{{\mathcal O}}
\newcommand{\Z}{{\mathbb Z}}
\newcommand{\NN}{{\mathbb N}}
\newcommand{\bbN}{{\mathbb N}}
\newcommand{\fkX}{{\mathfrak X}}
\newcommand{\fkY}{{\mathfrak Y}}

\newcommand{\Q}{{\mathbb Q}}

\newcommand{\R}{{\mathbb R}}

\newcommand{\bbC}{{\mathbb C}}

\DeclareMathOperator{\et}{\acute{e}t}
\DeclareMathOperator{\proet}{pro-\acute{e}t}
\DeclareMathOperator{\fet}{f-\acute{e}t}

\DeclareMathOperator{\rk}{rk}

\DeclareMathOperator{\Ker}{Ker}

\DeclareMathOperator{\Stab}{Stab}

\DeclareMathOperator{\an}{an}

\DeclareMathOperator{\Gm}{\mbf G_m}

\DeclareMathOperator{\RNS}{RNS}
\DeclareMathOperator{\HTvLoc}{HTvLoc}
\DeclareMathOperator{\HTv}{HTv}

\DeclareMathOperator{\HT}{HT}

\DeclareMathOperator{\Gal}{Gal}

\DeclareMathOperator{\spec}{sp}

\DeclareMathOperator{\gf}{\pi_1}

\DeclareMathOperator{\get}{\pi_1^{\acute{e}t}}
\DeclareMathOperator{\ga}{\pi_1^{alg}}

\DeclareMathOperator{\gtemp}{\pi_1^{temp}}

\DeclareMathOperator{\ggeom}{\pi_1^{log-geom}}

\DeclareMathOperator{\Hom}{Hom}

\newcommand{\findem}{\end{proof}}
\newcommand{\dem}{\begin{proof}}

\newcommand{\da}{\begin{displaystyle}}
\newcommand{\db}{\end{displaystyle}}

\newcommand{\mbf}{\mathbf}

\newcommand{\mbb}{\mathbb}


\DeclareMathOperator{\C}{C}


\newcommand{\N}{\operatorname{N}}


\newcommand{\Isom}{\operatorname{Isom}}

\begin{document}

\title{Resolution of Non-Singularities and the Absolute Anabelian Conjecture}
\author{Emmanuel Lepage}
\maketitle

\section*{Introduction}

Grothendieck anabelian conjecture is concerned with the recovery, for smooth curves over number fields, of a curve from its étale fundamental group. More precisely, it conjectures that if $X$ and $Y$ are two hyperbolic curves over a number field $K$ with absolute Galois group $G_K$, then $\Isom_K(X,Y)\to \Isom_{G_K}(\ga(X),\ga(Y))/\sim$ is  bijective, where $\sim$ is given by composing with inner automorphisms. This conjecture was proved by S. Mochizuki in \cite{LocProP}, where it is proved much more generally for sub-$p$-adic fields.

In this article we will only be interested in the case where $K$ is a $p$-adic field. In \cite{AAG2}, Mochizuki then asks if one can remove the assumption of lying over an isomorphism of absolute Galois groups. This gives us the following conjecture:
\begin{conj}
Let $K,L$ be finite extensions of $\Q_p$, and $X/L$, $Y/K$ be two hyperbolic curves. Then the map $\Isom(X,Y)\to \Isom(\ga(X),\ga(Y))/\sim$ is  bijective.
\end{conj}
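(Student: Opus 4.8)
The plan is to prove bijectivity by establishing injectivity and surjectivity separately, with essentially all of the work concentrated in the surjectivity (the construction of the inverse map). For injectivity I would argue as in the relative theory: if two isomorphisms $f,g\colon X\isom Y$ induce the same class in $\Isom(\ga(X),\ga(Y))/\sim$, then $g^{-1}\circ f$ is an automorphism of $X$ acting trivially up to inner automorphism on $\ga(X)$, and the faithfulness of the outer action on the geometric fundamental group of a hyperbolic curve forces $g^{-1}\circ f=\id$. Everything then reduces to surjectivity: given an isomorphism $\alpha\colon\ga(X)\isom\ga(Y)$, produce an isomorphism $X\isom Y$ inducing it.

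The first genuine step is to show that $\alpha$ respects the partition of each fundamental group into its geometric and arithmetic parts, i.e. that $\alpha$ carries the geometric subgroup $\Delta_X=\ker(\ga(X)\onto G_L)$ onto $\Delta_Y=\ker(\ga(Y)\onto G_K)$, thereby inducing both an isomorphism $\alpha_\Delta\colon\Delta_X\isom\Delta_Y$ of geometric groups and an isomorphism $\bar\alpha\colon G_L\isom G_K$ of local Galois groups. Here I would invoke the group-theoretic characterization of the geometric subgroup inside the arithmetic fundamental group of a $p$-adic hyperbolic curve (via cohomological dimension, the structure of the maximal pro-$p$ quotient of an open subgroup, and the cyclotomic rigidity carried by the curve). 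This is precisely the point at which the hypothesis that both groups arise from hyperbolic \emph{curves}, rather than from the bare local Galois groups, is indispensable.

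The crux, and the main obstacle, is that $\bar\alpha\colon G_L\isom G_K$ need not a priori be induced by an isomorphism of the fields $K$ and $L$: the outer automorphism group of a $p$-adic local Galois group is enormous and contains many non-geometric elements, so Mochizuki's relative theorem of \cite{LocProP} does not apply directly. This is where resolution of non-singularities enters. The plan is to use $\RNS$ to reconstruct, purely group-theoretically from $\ga(X)$, the dual graphs of the stable reductions of all finite étale covers of $X$ together with their edge-length (i.e. valuation) data, and to assemble these into the metrized skeleton of the Berkovich analytic curve $X^{\an}_{\Cp}$ carrying its $G_K$-action. Resolution of non-singularities is exactly what guarantees that every vertex and edge of this metric skeleton is realized in the stable reduction of some cover, so that the reconstruction is complete and functorial in $\alpha$. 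Since this metric data is invisible to the Galois group alone, it supplies precisely the rigidity missing from the purely Galois-theoretic picture: matching the reconstructed metrized skeleta under $\alpha$ forces $\bar\alpha$ to respect valuations and hence to be geometric.

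With $\bar\alpha$ known to be geometric I would conclude in two mutually reinforcing ways. On one hand, $\bar\alpha$ being induced by a field isomorphism returns us to the setting of the relative conjecture, and the theorem of \cite{LocProP} then yields a unique $X\isom Y$ over it inducing $\alpha$. On the other hand, and more robustly, the reconstructed metrized Berkovich datum together with its analytic structure can be algebraized, via rigid/Berkovich GAGA and descent from $\Cp$ back to the $p$-adic base, directly into an isomorphism of curves. I expect the technical heart of the argument to lie entirely in the $\RNS$-based reconstruction of the third step, while the first, second, and fourth steps are comparatively formal given the existing relative theory.
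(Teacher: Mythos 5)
The statement you are proving is stated in the paper as a \emph{Conjecture}, and the paper does not prove it: it only proves the special case in which both curves satisfy resolution of non-singularities (the Theorem of the introduction and the final theorem of Section 3). Your proposal has a fatal gap at exactly the point where you write ``This is where resolution of non-singularities enters'': $\RNS$ is not a hypothesis of the conjecture, and it is not known to hold for an arbitrary hyperbolic curve over a $p$-adic field --- the paper only records it for Mumford curves and curves of quasi-Belyi type, and the whole point of the paper is that the absolute result is obtained precisely on the class of curves for which $\RNS$ can be verified. By invoking $\RNS$ for the given $X$ and $Y$ you are silently strengthening the hypotheses, so what you would prove (at best) is the paper's theorem, not the conjecture.

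Even restricted to the $\RNS$ case, your route diverges from the paper's in a way that leaves a second unsubstantiated step. You propose to show that the induced isomorphism $\bar\alpha\colon G_L\to G_K$ of local Galois groups is ``geometric'' by matching metrized skeleta, and then to fall back on the relative theorem of \cite{LocProP}. The paper never establishes that $\bar\alpha$ comes from a field isomorphism; instead it invokes Mochizuki's reduction (\cite[Thm 2.9]{AAG2}), which says it suffices to characterize group-theoretically which partial sections $G_{K'}\to\ga(X)$ arise from closed points. This is done by reconstructing the full Berkovich space $\lvert\overline X\rvert$ (not just the skeleton) as $(\varprojlim_Z\lvert\G(Z)\rvert)/\ga(X)$, and then --- this is flagged in the introduction as the main difficulty --- distinguishing type 1 from type 4 points, either metrically via the recovered log-structure or via the non-splitting of $\Z_p$-torsors at type 4 points (Proposition \ref{NonSplit}); rigid points are then the type 1 points whose decomposition group has open image in $G_K$. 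Your proposal never addresses the type 1 versus type 4 dichotomy, and your claim that the skeleton data ``forces $\bar\alpha$ to respect valuations and hence to be geometric'' is asserted rather than argued; no mechanism is given for promoting metric compatibility on skeleta to geometricity of an automorphism of $G_K$, and the paper deliberately avoids having to do so.
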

The conjecture was proved by Mochizuki in \cite{AAG2} in the case where $X$ and $Y$ are of quasi-Belyi type. This quasi-Belyi type condition doesn't contain any proper curve. The goal of this paper is to extend this result to a wider family of curves, that in particular contains every Mumford curve : 
\begin{thm}
Let $K,L$ be finite extensions of $\Q_p$, and $X/L$, $Y/K$ be two hyperbolic curves satisfying resolution of non singularities (see definition \ref{defRNS}), for example two hyperbolic Mumford curves. Then the map $\Isom(X,Y)\to \Isom(\ga(X),\ga(Y))/\sim$ is  bijective.
\end{thm}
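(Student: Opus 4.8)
The plan is to reduce the absolute statement to Mochizuki's relative Isom-theorem for hyperbolic curves over sub-$p$-adic fields (\cite{LocProP}), the reduction being effected by reconstructing, from the abstract isomorphism of fundamental groups, an isomorphism of base fields. Write $\alpha\colon \ga(X)\isom\ga(Y)$ for a given isomorphism (the substantive assertion is surjectivity; injectivity will follow from the faithfulness built into the relative theorem). The first step is purely group-theoretic: I would invoke the known characterization of the geometric fundamental group $\ga(X_{\overline L})\subset\ga(X)$ as a group-theoretic invariant of $\ga(X)$ (detected, over a $p$-adic base, via cohomological dimension together with weight/Frobenius considerations that distinguish it from the absolute-Galois quotient). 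Consequently $\alpha$ carries $\ga(X_{\overline L})$ onto $\ga(Y_{\overline K})$ and descends to an isomorphism $\overline\alpha\colon G_L\isom G_K$; along the way one also reconstructs the residue characteristic $p$ and the cyclotomic character.

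The heart of the matter—and the place where resolution of non-singularities enters—is to promote $\overline\alpha$ to a \emph{geometric} isomorphism, i.e. one induced by an isomorphism of fields $\sigma\colon L\isom K$. This cannot follow from $G_L\cong G_K$ alone, since absolute Galois groups of $p$-adic fields neither determine the field nor have only geometric automorphisms; the curve data must be used. In the quasi-Belyi case of \cite{AAG2} this input was supplied by Belyi cuspidalization, which manufactures enough rational points to rebuild the field additively, but for proper or cusp-free curves that tool is unavailable, and RNS is its substitute. Concretely, I would use RNS (Definition \ref{defRNS}) to reconstruct, from $\ga(X)$, the tempered fundamental group together with the dual semi-graphs of the stable models of finite coverings, assembled into the Berkovich analytification $X^{\an}$ with its metric/combinatorial structure: RNS guarantees that every type-$2$ point occurs as a vertex in the reduction of some covering, so the reconstructed data genuinely recovers the full analytic space. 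Transporting this reconstruction through $\alpha$ shows that $\alpha$ respects the analytic structures, and the interaction between the graph metric and the valuation rigidifies $\overline\alpha$, forcing it to preserve the $p$-adic valuation and hence the field structure; this produces $\sigma\colon L\isom K$ over which $\alpha$ lies.

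With $\sigma$ in hand, $\alpha$ becomes an isomorphism lying over the Galois isomorphism $G_L\isom G_K$ induced by $\sigma$, so the relative Isom-theorem (\cite{LocProP}) applies and yields a unique isomorphism $f\colon X\isom Y$ of schemes with $\ga(f)\sim\alpha$, proving surjectivity. Injectivity is then immediate: two isomorphisms inducing the same outer isomorphism induce the same $\overline\alpha$ up to conjugacy, hence the same field isomorphism $\sigma$, and the uniqueness clause of the relative theorem forces them to coincide.

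I expect the main obstacle to be the geometricity step of the second paragraph. Two difficulties are intertwined there: first, since we are handed only the profinite group $\ga$ and not the tempered group, one must show that the tempered and combinatorial data are already visible inside $\ga$—this is precisely where RNS does its work, by rendering the type-$2$ points group-theoretically accessible; second, one must extract from the reconstructed metric graph the valuation-theoretic rigidity needed to pin down $\overline\alpha$, which amounts to excluding the exotic, non-geometric freedom that an abstract isomorphism of $p$-adic absolute Galois groups could otherwise carry. Controlling this freedom is the delicate point on which the whole reduction rests.
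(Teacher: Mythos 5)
There is a genuine gap, and it sits exactly where you locate ``the delicate point'': the passage from the reconstructed analytic/combinatorial data to a field isomorphism $\sigma\colon L\isom K$. You assert that ``the interaction between the graph metric and the valuation rigidifies $\overline\alpha$, forcing it to preserve the $p$-adic valuation and hence the field structure,'' but no mechanism is given, and none is known to work this way: abstract isomorphisms of absolute Galois groups of $p$-adic fields are in general non-geometric, and to invoke Mochizuki's local geometricity criterion one would have to show that $\overline\alpha$ preserves the higher ramification filtration, which the homeomorphism of Berkovich spaces does not obviously yield. The paper's architecture is different and avoids this step entirely: it reduces, via \cite[Thm 2.9]{AAG2}, to characterizing group-theoretically which partial sections $G_{K'}\to\ga(X)$ are decomposition groups of closed points; the reconstruction of $\lvert\overline X\rvert$ as $(\varprojlim_Z\lvert\G(Z)\rvert)/\ga(X)$ (using RNS and \cite[Thm 2.7]{HypCurve}) then identifies closed points as the type-$1$ points whose decomposition group has open image in $G_{K}$. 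The geometrization of the base-field isomorphism is an output of Mochizuki's theorem, not an input one must supply first, so your reduction to the relative Isom-theorem of \cite{LocProP} inverts the feasible logical order.

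A second, related omission: even granting your architecture, the proposal never engages with the actual technical crux, namely distinguishing type-$1$ from type-$4$ points in the reconstructed space. This is where most of the paper's work lies --- either via the canonical metric recovered from the log-structure of the special fibers (type-$1$ points are those at infinite distance along type-$2$ points), or via Proposition \ref{NonSplit} (a $\Z_p(1)$-torsor with nonvanishing Hodge--Tate image does not split at a type-$4$ point, so such points have nontrivial geometric decomposition group). Without this step one cannot isolate the rigid points, and hence cannot characterize the decomposition groups of closed points nor extract any field-theoretic rigidity from the homeomorphism of Berkovich spaces. Your first paragraph (recovering $\ga(X_{\overline L})$ and $\overline\alpha\colon G_L\isom G_K$) does match the opening move of the paper's Proposition \ref{BerkRecovery}, but the remainder needs to be rebuilt along the lines above.
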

A theorem of Mochizuki (\cite[Thm 2.9]{AAG2}) claims that to prove that it is sufficient to characterize group-theoretically among the partial Galois sections $G_{K'}\to\ga(X)$ those which come from a closed point $x\in X(K')$ for any finite extension $K'$ of $K$.
The idea of the proof then relies on the reconstruction of the Berkovich space $\lvert X\rvert$ (as a topological space) of the curve from the étale fundamental group, as was done in \cite[Thm. 3.10]{RNSMumf} for the geometric tempered fundamental group. From a result of Mochizuki (\cite[Thm 2.7]{HypCurve}), one can recover from $\ga(X)$ the dual graph $\G(X)$ of the special fiber of the stable model of $X_{\overline K}$. By applying this reconstruction to all the finite étale coverings, the recovering of the Berkovich space follows from a homeomorphism $\lvert \overline{X}\rvert = (\varprojlim_{Z}\lvert \G(Z)\rvert)/\ga(X)$, where $Z$ goes through connected pointed finite étale covers of $X$ and $\overline{X}$ is the smooth compactification of $X$. It is then sufficient to characterized rigid points inside the Berkovich space $\lvert X\rvert$.

The Berkovich points on curves are traditionally classified into four classes, where the type 1 points correspond to $\bbC_p$-points. The type 2 points are type 3 points are easy to distinguish from the others by the local topological behavior. The main issue is then to distinguish between type 4 and type 1 points. In this paper, we present two different ways of doing it. First through a metric characterization, using Mochizuki's recovery of the log-structure of the special fiber of the stable model \cite[Thm 2.7]{HypCurve}). Secondly, by showing that the geometric decomposition group of type 4 points is non trivial. More precisely, we will see that a $\Z_p$-torsor of $X_{\bbC_p}$ does not decompose at a type 4 points if it is not a topological torsor. Locally in a small disk, this $\Z_p$-torsor comes from Kummer classes of invertible functions, and by applying the log-differential, whose image is bounded, one gets that these invertible functions cannot have $p^n$th root for big enough $n$. Then the rigid points are characterized among type 1 points by the fact that their image in the absolute Galois group $G_K$ is open.

\section{Resolution of Non-Singularities}

Let $K$ be a complete nonarchimedean field of mixed characteristics $(0,p)$. Let $O_K$ be its ring of integers and  $\widetilde K$ be its residue field.

Let $X=\overline X\backslash D$ be a hyperbolic curve over  $K$. Let $X^{\an}$ be the associated Berkovich space. As a set, $X^{\an}$ can be seen as the union of the set of closed points of $X$ and the set of $\R$-valued valuations $v$ on the fraction field $K(X)$ extending the valuation on $K$. The completion of $K(X)$ for this valuation will be denoted by $\calH(v)$. Since $K(X)/K$ is an extension of transcendance degree one, for every such valuation $v$, $r(v)+t(v)\leq 1$ where $r(v)=\rk(v(K(X)^\times)/v(K^\times))$ and $t(v)$ is the transcendance degree of the residue field $\widetilde{\calH(v)}$ of $\calH(v)$ as an extension of $\widetilde K$.

A type 2 point of $X^{\an}$ then corresponds to a valuation $v$ such that $t(v)=1$, and a type 3 point corresponds to a valuation $v$ such that $r(v)=1$. If a point $x$ of $X^{\an}$ is a rigid point or comes from a valuation such that $K(X)$ embeds isometrically into the completion of an algebraic closure of $K$, then $x$ is said to be of type $1$. Otherwise it is of type $4$.

If $\overline X$ has a stable model $\fkX$ over $O_K$, there is a natural specialization map $\spec:X^{\an}\to \fkX_s$ where $\fkX_s=\fkX_{\widetilde K}$ is the special fiber of $\fkX$.
Then the generic point $z$ of any irreducible component of $\fkX_s$ has a unique preimage $v_z$ and this preimage is a type 2 point, and the residue field $\widetilde{\calH(v_z)}$ is isomorphic to $k(z)$. Such a type 2 point will be called a skeletal point of $X^{\an}$, and the set of skeletal points will be denoted by $V(X)$. Skeletal points are the vertices of the minimal triangulation of $X^{\an}$ in the sense of \cite[\S 5.1.13]{ducroscurves}. The connected components of $X^{\an}\backslash V(X)$ are given by the preimages by $\spec{}$ of the closed points in $\fkX_s$. Moreover, if $K$ is algebraically closed and $x$ is a closed point of $\fkX_s$, then $\spec^{-1}(x)$ is analytically isomorphic to :
\begin{itemize}
\item a punctured open disk if $x$ is a cusp;
\item an open disk if $x$ is a smooth point;
\item an open annulus if $x$ is a node.
\end{itemize}
In particular if $v$ is a type two point of $X^{\an}$ such that $\calH(v)$ is the fraction field of a curve of genus $>0$, then $v$ is skeletal.

\begin{dfn} (\cite[Def. 2.1]{RNSMumf})\label{defRNS}

Let $\C$ be an algebraically closed complete nonarchimedean field of mixed characteristics $(0,p)$. Let $X$ be a hyperbolic curve over  $\C$.
$X$ is said to satisfy \emph{resolution of non-singularities} (or $\RNS(X)$ for short) if 
 for every point $x$ of type $2$ of $X^{\an}$, there exists a finite \'etale cover $f:Y\to X$ such that $f^{-1}(x)\cap V(Y)\neq \emptyset$ .

More generally, if $X$ is a hyperbolic curve over a complete field $K$ of mixed characteristics $(0,p)$, then  we will write $\RNS(X)$ for $\RNS(X_{\C})$, where $\C$ is the completion of an algebraic closure of $K$.
\end{dfn}

\begin{prop}
Let $f:X\to Y$ be a map of hyperbolic curves over $K$.
\begin{enumerate}
\item If $f:X\to Y$ is a dominant map, then $\RNS(Y)$ implies $\RNS(X)$.
\item If $f:X\to Y$ is finite étale, then $\RNS(X)$ and $\RNS(Y)$ are equivalent.
\end{enumerate}
\end{prop}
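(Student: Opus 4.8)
The plan is to reduce everything to two manipulations of finite étale covers --- composing them and pulling them back --- together with the single geometric input that skeletal points are preserved by finite dominant morphisms. Since $\RNS$ is defined after base change to $\C$ (the completion of an algebraic closure of $K$), and since dominant morphisms and finite étale morphisms remain dominant resp.\ finite étale after this base change, I would first replace $K$ by $\C$ and work over an algebraically closed field. I also record the elementary fact that type $2$ points are preserved in both directions along a finite morphism $\phi$ of curves: if $w$ lies over $z$ then $\calH(w)/\calH(z)$ is finite, so the residue field extension $\widetilde{\calH(z)}\subseteq\widetilde{\calH(w)}$ is algebraic and $t(w)=t(z)$; hence $w$ is of type $2$ iff $z$ is.

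With these reductions, the reverse implication in (2) (\ie $\RNS(X)\Rightarrow\RNS(Y)$ for $f$ finite étale) is immediate: given a type $2$ point $y\in Y^{\an}$, choose any point $x$ of the (nonempty) fiber $f^{-1}(y)$; it is again of type $2$, so $\RNS(X)$ furnishes a finite étale $h\colon Z\to X$ and a skeletal point $z\in h^{-1}(x)\cap V(Z)$. Then $f\circ h\colon Z\to Y$ is finite étale, $z$ lies over $y$, and $z\in V(Z)$, so $y$ is resolved. The forward implication in (2) is the special case of (1) in which $f$ is finite étale, so it remains to prove (1).

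For (1), let $x\in X^{\an}$ be of type $2$ and set $y=f(x)$; extending $f$ to the finite morphism $\overline f\colon\overline X\to\overline Y$ of smooth compactifications shows $\calH(x)/\calH(y)$ is finite, so $y$ is again of type $2$ and lies in $Y^{\an}$ (a type $2$ point is never a cusp). Applying $\RNS(Y)$ gives a finite étale $g\colon Z\to Y$ and a skeletal point $z\in g^{-1}(y)\cap V(Z)$. I would then pull back $g$ along $f$: the base change $W=X\times_Y Z\to X$ is finite étale, and a factor of the finite étale $\calH(x)$-algebra $\calH(x)\otimes_{\calH(y)}\calH(z)$ produces a point $w\in W^{\an}$ lying over $x$ whose restriction to $\calH(z)$ is the valuation $z$ (uniqueness of the extension of a valuation over the complete field $\calH(y)$). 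Replacing $W$ by the connected component $W_0$ containing $w$ --- still a finite étale, hence hyperbolic, cover of $X$ --- it suffices to show that $w\in V(W_0)$.

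The crux is therefore the claim that a point $w$ lying over a skeletal point $z$ via the finite dominant morphism $W_0\to Z$ (the base change of $\overline f$) is itself skeletal. Here the residue curve $C_w$ maps finitely and dominantly onto $C_z$, since $\widetilde{\calH(z)}\subseteq\widetilde{\calH(w)}$ is a finite extension. By Riemann--Hurwitz the genus cannot drop, $g(C_w)\geq g(C_z)$, and by surjectivity on closed points every node or cusp of $C_z$ (\ie every edge direction of the minimal skeleton at $z$) has a preimage of the same kind on $C_w$; hence the stability condition defining a skeletal point --- positive genus or at least three special points --- passes from $z$ to $w$. In particular, if $C_z$ already has positive genus the quoted remark makes $w$ skeletal at once. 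Thus $w\in V(W_0)$, the cover $W_0\to X$ resolves $x$, and $\RNS(X)$ follows. I expect this last step --- the compatibility of the set $V(\cdot)$ of skeletal points with finite morphisms --- to be the only nontrivial point: the genus half is immediate from Riemann--Hurwitz together with the stated remark, while the behaviour of the edge directions requires invoking the compatibility of skeleta with finite morphisms of curves (for instance Ducros, or Amini--Baker--Brugall\'e--Rabinoff).
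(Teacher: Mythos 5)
Your proof follows the same route as the paper's: reduce to an algebraically closed base field, prove (1) by pulling the resolving cover of $Y$ back along $f$ and locating a point of $X\times_Y Z$ lying above both $x$ and the skeletal point $z$, and deduce (2) from (1) together with composition of finite étale covers. The only difference is that you spell out the key inclusion $f_Z^{-1}(V(Z))\subseteq V(X\times_Y Z)$ (via Riemann--Hurwitz on residue curves and the preservation of special points under the finite map of compactifications), which the paper asserts without comment; your justification of it is correct.
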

\begin{proof}
One can assume $K$ to be algebraically closed.
\begin{enumerate}
\item Let $x\in X^{\an}$ be a point of type 2, and let $y=f(x)$. Then $y$ is also of type $2$, and by $\RNS(Y)$, there exists a finite étale cover $g:Z\to Y$ and $z\in g^{-1}(y)\cap V(Z)$. Let $f_Z:X\times_YZ\to Z$ be the map obtained by pulling back $f$ and $g_X: X\times_YZ\to X$ be the finite étale cover obtained by pulling back $g$. Then there exists $z'\in g_X^{-1}(x)\cap f_Z^{-1}(z) \subset g_X^{-1}(x)\cap f_Z^{-1}(V(Z))\subset g_X^{-1}(x)\cap V(X\times_YZ)$.
\item By (i),  $\RNS(Y)$ implies $\RNS(X)$. Let us assume $\RNS(X)$ and let $y\in Y^{\an}$ be a point of type 2, and let $x\in X^{\an}$ be a preimage of $y$ by $f$. Then there exists a finite étale cover $g:Z\to X$ and $z\in g^{-1}(x)\cap V(Z)$. Then $fg:Z\to Y$ is a finite étale cover and $z\in (fg)^{-1}(y)\cap V(Z)$, so that $\RNS(Y)$ is true.
\end{enumerate}
\end{proof}

\begin{prop}(\cite[Th. 0.1]{RNSMumf})
\begin{enumerate}
\item If $\overline X$ is a Mumford curve over $\overline \Q_p$ and $X\subset \overline X_{\bbC_p}$ is a Zariski open subset of $X_{\bbC_p}$ which is a hyperbolic curve over $\bbC_p$, then $\RNS(X_{\bbC_p})$ is true.
\item If $X$ is a $\bbC_p$-curve of quasi-Belyi type, then $\RNS(X)$ is true.
\end{enumerate}
\end{prop}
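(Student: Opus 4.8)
The plan is to use the positive-genus criterion recalled just before Definition~\ref{defRNS}: a type $2$ point $v$ whose residue field $\widetilde{\calH(v)}$ is the function field of a curve of genus $>0$ is automatically skeletal. One may assume $K=\Cp$ is algebraically closed. Given a type $2$ point $x$ with residue curve $C_x$, if $g(C_x)>0$ there is nothing to prove, so the whole difficulty lies with the points for which $C_x\cong\bbP^1$. For such an $x$ I would look for a finite étale cover $f\colon Y\to X$ and a point $y\in f^{-1}(x)$ whose residue curve $C_y$ is a cover of $C_x\cong\bbP^1$ of positive genus; then $y\in V(Y)$ and we are done. The mechanism to produce such a $y$ is a Kummer cover: if $\phi$ is invertible on $X$ and its reduction $\bar\phi$ at $x$ is a non-constant rational function on $C_x$ with at least three distinct zeros and poles, then for $\ell$ prime to $p$ the cover $z^{\ell}=\phi$ is finite étale over $X$, and its residue curve above $x$ is the $\mu_\ell$-cover of $\bbP^1$ branched along the zeros and poles of $\bar\phi$; by Riemann--Hurwitz this curve has positive genus.

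For the quasi-Belyi case (ii) I would first reduce to the tripod. By definition of quasi-Belyi type, after replacing $X$ by a finite étale cover --- which by the preceding proposition does not affect whether $\RNS$ holds --- there is a dominant map $X\to T$, with $T=\bbP^1\setminus\{0,1,\infty\}$; by part (i) of the preceding proposition it therefore suffices to prove $\RNS(T)$. On $T$ the functions $t$ and $t-1$ are invertible, and the Kummer covers $t=s^{\ell}$, together with their translates by roots of unity and their iterates, form the usual Belyi tower of finite étale covers of $T$, along which the three branch points $0,1,\infty$ spread out into ever larger finite sets. The key point to establish is that for every type $2$ point $x$ of $T^{\an}$ some cover in this tower has its branch locus reducing to at least three distinct points of $C_x$; a final Kummer cover branched there then furnishes a positive-genus residue curve over $x$, and hence $\RNS(T)$.

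For the Mumford case (i) I would exploit the Schottky uniformization $\overline X^{\an}=\Omega/\Gamma$, with $\Gamma\subset PGL_2(\Cp)$ free of rank $g$, together with its theta functions $\theta(a,b;\,\cdot\,)=\prod_{\gamma\in\Gamma}\frac{z-\gamma a}{z-\gamma b}$, whose $\Gamma$-automorphic ratios provide meromorphic functions on $\overline X$ with prescribed zeros and poles. Passing to the open curve $X$ one may use covers ramified along the missing punctures, so that the residue curve of the cover above $x$ is genuinely allowed to acquire positive genus; in the proper case, where only unramified covers are available and every component of the reduction stays of genus $0$, one instead produces, by wild étale covers of the formal fibres, genus-$0$ components carrying at least three nodes, which are stable and hence skeletal. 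In either situation I would, for a chosen lift $\tilde x\in\Omega$ of $x$, select the divisors $a,b$ of a theta function so that its reduction at $\tilde x$ is non-constant with at least three distinct zeros and poles on $C_{\tilde x}\cong\bbP^1$, and then take the associated Kummer cover.

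The main obstacle, in both cases, is exactly this reconstruction step: producing, uniformly over \emph{every} type $2$ point $x$, a finite étale cover whose stable reduction acquires a component above $x$ --- equivalently, an invertible function (a Belyi coordinate, or a ratio of theta functions) whose reduction at $x$ is non-constant with at least three branch points on $C_x$. The points lying deep inside a formal fibre, where the naive global functions all reduce to constants or to functions with too few branch points, are the delicate ones; controlling their reductions requires the explicit geometry of the Belyi tower, respectively of the limit set and the tree of the Schottky group $\Gamma$, and this is where essentially all the work of \cite{RNSMumf} is concentrated.
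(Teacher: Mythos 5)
The paper does not reprove this proposition --- it is quoted from \cite[Th. 0.1]{RNSMumf} --- but it does indicate the actual mechanism right below, namely the criterion ``$\HTv_1(X)$ implies $\RNS(X)$'', whose proof proceeds via $\mu_{p^n}$-torsors and the Hodge--Tate map. Measured against that, your central mechanism for part (i) has a genuine gap: a finite \'etale cover of degree prime to $p$ of an open disk over $\bbC_p$ is \emph{split} (the tame fundamental group of a disk is trivial), so the preimage of a residue disk $D$ under a tame Kummer cover $z^{\ell}=\phi$, $(\ell,p)=1$, is a disjoint union of disks, and an open disk embedded in a hyperbolic curve and containing no cusp meets no vertex of the minimal skeleton. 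Hence tame covers can never resolve a type $2$ point lying in the interior of a residue disk --- and those are precisely the points for which $\RNS$ is nontrivial. For a \emph{proper} Mumford curve there are moreover no punctures to ramify along, so the Riemann--Hurwitz mechanism you describe is unavailable in its entirety; your own escape hatch (``wild \'etale covers of the formal fibres'') is the whole problem, since the difficulty is to produce a \emph{global} finite \'etale cover of $X$ whose restriction to the disk is a nontrivial wild cover acquiring a vertex. This is exactly what the theta functions are for in \cite{RNSMumf}: one takes the class $c\in H^1(X,\Z_p(1))$ of a theta function, and the induced $\mu_{p^n}$-torsors acquire new vertices over $D$ as soon as the vanishing order $e$ of the differential $\HT_X(c)=d\log\theta$ at some point of $D$ satisfies $e+1\neq p^k$; that is the content of the $\HTv_1$ criterion proved in the paper, and it is a wildly ramified, $p$-power phenomenon with no tame analogue.

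Your sketch of part (ii) is closer to the genuine (Mochizuki-style) argument, because there the covers of the tripod are ramified at the \emph{cusps}, and once at least three cusps of some cover lie in $D$ with distinct reductions, $D$ minus those punctures is no longer a disk and a tame cyclic cover branched there does create genus. But two points are missing. First, the specific tower you name (Kummer covers $t=s^{\ell}$, their translates and iterates) does not have dense branch locus in $\bbP^1(\bbC_p)$; what is needed is the full strength of Belyi's theorem --- every $\overline\Q$-point of $\bbP^1\setminus\{0,1,\infty\}$ is a cusp of \emph{some} finite \'etale cover of the tripod --- combined with the density of $\overline\Q$-points in every disk. Second, you acknowledge but do not close the reduction step (``the key point to establish is\ldots''), which is where the entire proof lives. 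So the proposal identifies the right intermediate target (positive-genus or unstable residue curves force new skeletal points) but, for (i), the proposed tool provably cannot reach it.
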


The case of Mumford curves over $\overline{\Q}_p$ follows from a more general criterion in terms of the following Hodge-Tate map. 
Let $X$ be a smooth adic space over $K$, and let $\nu:X_{\proet}\to X_{\et}$ be the natural map from its pro-étale site to its étale site. Then the short exact sequence on $X_{\proet}$:
\[1\to \Z_p(1)\to \lim_{\times p}\cO^{\times}_{X}\to \cO^{\times}_X\to 1\]
induces a boundary map $\kappa:\cO_{X_{\et}}^\times=\nu_* \cO^{\times}_X\to R^1\nu_*\Z_p(1)$. There exists a unique $\cO_{X_{\et}}$-linear map $\phi:\Omega^1_{X_{\et}}\to R^1\nu_*\widehat \cO^\times_{X_{\et}}$ such that the following diagram commutes (see \cite[Lem. 3.24]{scholzeCDM}):
\[\xymatrix{\cO_{X_{\et}}^\times \ar[d]^{dlog} \ar[r]^\kappa &  R^1\nu_*\Z_p(1) \ar[d]^i\\
\Omega^1_{X_{\et}} \ar[r]^{\phi} &
R^1\nu_*\widehat \cO^\times_{X_{\et}}}
\]
and $\phi$ is an isomorphism.
By taking global sections of $\phi^{-1}i$, one gets a map:
\[\HT_X: H^1(X,\Z_p(1))\to H^0(X,\Omega_X^1).\]

\begin{dfn}
\begin{enumerate}
\item Let $\HTvLoc_1(X)\subset X(K)$ (for Hodge-Tate vanishing locus of X) be the set of points such that there exists$c\in H^1(Y,\Z_p(1))$ such that $HT_Y(c)$ has vanishing order $e<+\infty$ at $y$ such that $e+1$ is not a power of $p$.
\item Let $\HTvLoc^{\fet}_1(X)=\bigcup_{f:Y\to X}f(\HTvLoc_1(Y))\subset X(K)$, where the union is indexed through all finite étale covers of $X$.
\item Let $\HTv_1(X)$ be the following property: $\HTvLoc^{\fet}_1(X)$ is dense in $X(K)$.
\end{enumerate}
\end{dfn}

\begin{prop}
Let $X$ be a hyperbolic curve over $K$. Then $\HTv_1(X)$ implies $\RNS(X)$.
\end{prop}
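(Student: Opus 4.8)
The plan is to show that the Hodge-Tate vanishing property $\HTv_1(X)$ produces, for every type $2$ point, a finite \'etale cover in which that point becomes skeletal, which is exactly what $\RNS(X)$ demands. First I would reduce to the case where $K = \C$ is algebraically closed, since $\RNS(X)$ is by definition a statement about $X_{\C}$, and the Hodge-Tate formalism is insensitive to this base change. Fix a type $2$ point $x$ of $X^{\an}$; by definition its residue field $\widetilde{\calH(x)}$ has transcendence degree $1$ over $\widetilde K$, so it is the function field of some smooth projective curve $C_x$ over $\widetilde K$. The strategy is to use the density hypothesis on $\HTvLoc^{\fet}_1(X)$ to exhibit a cover whose stable model has, in its special fiber, an irreducible component whose generic point specializes to $x$.

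The key geometric step is to relate the Hodge-Tate vanishing condition at a \emph{rigid} point $y$ to the local behavior of $X^{\an}$ near the type $2$ point $x$. I would work in a formal neighborhood of $x$: recall that the connected components of $X^{\an}\setminus V(X)$ are, after passing to a suitable cover, open disks and annuli, and on such a disk a class $c\in H^1(Y,\Z_p(1))$ is the Kummer class of an invertible function $g$, with $\HT_Y(c)$ essentially $\dlog g$. The hypothesis that $\HT_Y(c)$ has a zero of order $e$ with $e+1$ not a power of $p$ at a point $y$ means that $g$ cannot be a $p^n$-th power near $y$ for large $n$; concretely this forces nontrivial ramification in the $\Z_p$-tower attached to $c$ over the residue curve, so that after pulling back along this tower the component of $\fkX_s$ through which $y$ specializes acquires positive genus. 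Since a type $2$ point whose completed residue field is the function field of a curve of genus $>0$ is skeletal (as the excerpt records explicitly), this is precisely the mechanism that turns $x$ into a vertex.

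Assembling these pieces, I would argue as follows. By $\HTv_1(X)$, the set $\HTvLoc^{\fet}_1(X)$ is dense in $X(\C)$, so in particular it meets the residue disk $\spec^{-1}(x_0)$ for the closed point $x_0$ under $x$ in the special fiber; choose such a point $y$ lying over some finite \'etale $f:Y\to X$ with a class $c$ witnessing the Hodge-Tate vanishing condition at $y$. Passing to the $\Z_p$-tower cut out by $c$ (or a finite layer thereof, which suffices since only $p^n$-divisibility for \emph{some finite} $n$ is needed), the order-of-vanishing obstruction forces the relevant component over $x$ to have positive genus, hence the preimage of $x$ meets the skeletal set of this cover. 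Composing $f$ with the cover from the tower yields a single finite \'etale cover $Z\to X$ with $(Z\to X)^{-1}(x)\cap V(Z)\neq\emptyset$, establishing $\RNS(X)$.

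The main obstacle, and the step I expect to require the most care, is the local analysis in the second paragraph: one must make precise how the vanishing order $e$ of the Hodge-Tate image controls the genus of the component appearing over $x$ in the stable model of the cover, and in particular verify that the condition ``$e+1$ not a power of $p$'' is exactly the numerical criterion guaranteeing that the associated cyclic $p$-cover of the residue curve $C_x$ is genuinely ramified (rather than split or unramified) over the specialization of $y$. This is where the interplay between the $p$-adic analytic picture (boundedness of $\dlog$, Kummer theory on disks) and the characteristic-$p$ geometry of the special fiber (Artin--Schreier--Witt type ramification, the genus jump) has to be controlled quantitatively.
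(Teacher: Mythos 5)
Your overall mechanism is the right one and matches the paper's: use the density of $\HTvLoc^{\fet}_1(X)$ to find a point $y$ (in some finite \'etale cover $f:Y\to X$) at which some class $c\in H^1(Y,\Z_p(1))$ has Hodge--Tate image vanishing to order $e$ with $e+1$ not a power of $p$, realize $c$ locally as a Kummer class, and let the associated $\mu_{p^n}$-torsors create new skeletal points. But there is a genuine gap in how you get from the skeletal points this produces back to the \emph{given} type $2$ point $x$. The new vertices appear inside a small disk $D''$ around the rigid point $y$ --- they lie above $D''$, not above $x$ --- and your assertion that ``the order-of-vanishing obstruction forces the relevant component over $x$ to have positive genus, hence the preimage of $x$ meets the skeletal set of this cover'' does not follow: $x$ is an arbitrary type $2$ point of the residue disk or annulus containing $y$, and nothing in the construction places a vertex over $x$ itself. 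The paper closes this gap by first invoking the reduction of \cite[Prop. 2.2]{RNSMumf}: $\RNS(X)$ follows once one shows that \emph{every} disk $D$ in $X$ admits a finite \'etale cover with a vertex above it. With that reduction in hand, the construction is run for a prescribed disk $D$, the point $y$ is chosen in $f^{-1}(D)$, and the vertices produced above $D''\subset f^{-1}(D)$ are exactly what is needed. Without this reduction (or a substitute for it), your argument only establishes a disk-level statement, not $\RNS(X)$.

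Two smaller points. First, your claim that on a disk the $\Z_p(1)$-class ``is the Kummer class of an invertible function $g$'' is not automatic: the vanishing of $H^1(D',\Gm)$ only gives, for each $n$, a function $a_n$ with $\kappa_n(a_n)=c_n$ at finite level, and one must normalize $a_n(y)=1$, write $a_n=b_n^{p^n}a_{n+1}$, and shrink the disk so that $b_n^{p^n}$ converges uniformly to $1$ in order to obtain a limit $a$ with $\kappa(a)=c$ and $\HT_Y(c)=\dlog(a)$; this convergence argument is a substantive step of the paper's proof that your sketch elides. Second, the quantitative endgame you correctly flag as the delicate point (how ``$e+1$ not a power of $p$'' forces new vertices in the tower) is handled in the paper by an appeal to \cite[Prop. 2.4]{RNSMumf} rather than by a direct Artin--Schreier--Witt genus computation, so you would either need to cite that result or actually supply the computation you describe.
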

\begin{proof}
By the argument of \cite[Prop. 2.2]{RNSMumf} it is enough to show that for every disk $D$ in $X$, there exists a finite étale cover $g:Z\to X$ such that $g^{-1}(D)\cap V(Z)\neq \emptyset$.

By definition of $\HTv_1(X)$, there exists a finite étale cover $f:Y\to X$, $y\in f^{-1}(D)$ and $c\in H^1(Y,\Z_p(1))$ such that the vanishing order $e$ of $HT_Y(c)$ at $y$ is not of the form $p^n-1$ for any $n\in\N$.

For $n\in\N$, let $c_n\in H^1(Y,\mu_{p^n})$ be the image of $c$ by the map induced from the surjection $\Z_p(1)\to\mu_{p^n}$ and let $g_n:Y_n\to Y$ be the corresponding $\mu_{p^n}$-torsor. Let $D'$ be a discal neighborhood of $y$ in $f^{-1}(D)$. Since $H^1(D',\Gm)=0$, the restriction $c_{n,D'}$ of $c_n$ to $D'$ is in the image of the Kummer map $\kappa_n:\cO^\times(D')\to H^1(D',\mu_{p^n})$. Let $a_n\in \cO^\times(D')$ such that $\kappa_n(a_n)=c_{n,D'}$ and $a_n(y)=1$. There exists $b_n\in \cO^\times(D')$ such that $a_n=b_n^{p^n}a_{n+1}$ and $b_n(y)=1$. After replacing $D'$ by a smaller disk $D''$, there exists $r<1$ such that for every $n\in\N$, $\lvert b_n-1\rvert_{D'}\leq r$, and therefore $(b_n^{p^n})_{n}$ converges uniformly to $1$ on $D'$. Therefore $(a_{n,D'})_n$ converges to $a\in \cO^\times(D'')$ and $c_{D''}$ is the image of $a$ by the boundary map $\kappa:\cO_{Y_{\et}}^\times=\nu_* \cO^{\times}_Y\to R^1\nu_*\Z_p(1)$. Therefore $HT_Y(c)_{D''}=dlog(a)$ and \cite[Prop. 2.4]{RNSMumf} shows that, for $n$ big enough, there exists $y_n\in V(Y_n)\cap g_n^{-1}(D'')\subset V(Y_n)\cap (fg_n)^{-1}(D)$. 
\end{proof}

\section{Reconstruction of the Berkovich topology}
If $X$ is a hyperbolic curve over $K$, let $\overline X$ be its smooth compactification, so that $X=\overline X\backslash D_X$ for some divisor $D_X$ on $\overline X$. Let $\lvert X\rvert$ be the Berkovich space of $X$, considered as a topological space.

If $\widetilde X$ is a universal pro-object in the category of finite étale cover of $X$, we will use the following notations : $\lvert \widetilde X\rvert=\varprojlim_{\widetilde X\to Y}\lvert Y\rvert$ and $\lvert \widetilde X\rvert_c=\varprojlim_{\widetilde X\to Y}\lvert \overline Y\rvert$, where $Y$ goes through $\widetilde X$-pointed finite étale cover of $X$.

\begin{prop}\label{BerkRecovery}
Let $X_1/K$ and $X_2/K_2$ be two p-adic hyperbolic curves satisfying rns and let $\widetilde X_i/\widetilde K_i$ be a universal pro-cover of $X_i/K_i$. Let $\alpha:\pi_1(X_1,\widetilde X_1)\to\pi_1(X_2,\widetilde X_2)$ be an isomorphism of fundamental groups. Then there is a unique homeomorphism of pro-Berkovich spaces $\tilde \alpha: \lvert \widetilde X_1\rvert \to \lvert \widetilde X_2\rvert$ and $\tilde\alpha_c : \lvert \widetilde X_1\rvert_c \to \lvert \widetilde X_2\rvert_c$ which are equivariant
in the sense that the following diagram commutes:
\[\xymatrix{\gf(X_1,\widetilde X_1)\times |\widetilde X_1|_c \ar[r]\ar[d]^{\alpha\times\tilde\alpha} & |\widetilde{X}_1|_c\ar[d]^{\tilde\alpha}\\
\gf(X_2,\widetilde X_2)\times |\widetilde X_2|_c \ar[r] & |\widetilde X_2|_c}
\]
In particular, by quotienting by the étale fundmental group and the geometric étale fundamental groups, one gets  homeomorphisms:
\[\bar\alpha:\overline X^{\an}_1\to\overline X_2^{\an},\]
\[\bar\alpha_{\bbC_p}:\overline X^{\an}_{1,\bbC_p}\to\overline X_{2,\bbC_p}^{\an},\]
\end{prop}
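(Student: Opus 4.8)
The plan is to reconstruct both pro-Berkovich spaces group-theoretically from the fundamental groups, checking that every step is functorial in finite étale covers, so that the isomorphism $\alpha$ transports one side to the other. The input is Mochizuki's theorem (\cite[Thm 2.7]{HypCurve}): from any open subgroup $\gf(\overline Y)\subset\gf(X_i,\widetilde X_i)$ corresponding to a pointed finite étale cover $\overline Y$, one recovers the dual (semi-)graph $\G(\overline Y)$ of the special fibre of the stable model of $\overline Y_{\Cp}$, hence its geometric realization $|\G(\overline Y)|$, which sits inside $|\overline Y|$ as the minimal skeleton with its canonical metric. First I would upgrade this to a functorial statement: a morphism $\overline Y'\to\overline Y$ of $\widetilde X_i$-pointed covers corresponds to an inclusion of open subgroups, and this inclusion induces the transition map $|\G(\overline Y')|\to|\G(\overline Y)|$ compatibly with Mochizuki's recipe, so that the inverse system $\bigl(|\G(\overline Y)|\bigr)_{\overline Y}$, together with its natural $\gf(X_i,\widetilde X_i)$-action, is built purely from the poset of open subgroups.

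Because $\alpha$ identifies the two posets of open subgroups $\gf$-equivariantly, it yields an equivariant homeomorphism $\varprojlim_{\overline Y_1}|\G(\overline Y_1)|\isom\varprojlim_{\overline Y_2}|\G(\overline Y_2)|$. The crucial step, and the one that uses $\RNS$, is to identify $\varprojlim_{\overline Y}|\G(\overline Y)|$ with $|\widetilde X_i|_c=\varprojlim_{\overline Y}|\overline Y|$. The inclusions $|\G(\overline Y)|\hookrightarrow|\overline Y|$ are compatible with the transition maps and give a continuous injection $\varprojlim_{\overline Y}|\G(\overline Y)|\to|\widetilde X_i|_c$; that it is onto is exactly the content of the $\RNS$-based recovery recalled in the introduction, namely $|\overline X_i|=(\varprojlim_{Z}|\G(Z)|)/\gf(X_i,\widetilde X_i)$. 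Indeed $\RNS(X_i)$ says every type $2$ point of every cover becomes a vertex of $\G(\overline Y')$ for some further cover $\overline Y'$, so the skeleta exhaust the dense set of type $2$ points; since the $|\G(\overline Y)|$ are compact and nested, their inverse limit fills up $|\widetilde X_i|_c$. This produces $\tilde\alpha_c$, and restricting to the locus over $X_i=\overline X_i\setminus D_{X_i}$ — the cusps being cut out group-theoretically by the cuspidal inertia subgroups — produces $\tilde\alpha$.

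For uniqueness, any equivariant homeomorphism must send each skeletal point, which is group-theoretically labelled by a vertex of some $\G(\overline Y)$, to the image prescribed by $\alpha$; as these points are dense and the spaces are Hausdorff, continuity forces $\tilde\alpha$ and $\tilde\alpha_c$ to be unique. Finally, $\gf(X_i,\widetilde X_i)$ acts on $|\widetilde X_i|_c$ with quotient $\overline X_i^{\an}$, while the geometric fundamental group — a closed normal subgroup that is group-theoretically characterized, hence matched by $\alpha$ — has quotient $\overline X_{i,\Cp}^{\an}$; the equivariance square then lets $\tilde\alpha_c$ descend to the two claimed homeomorphisms $\bar\alpha$ and $\bar\alpha_{\Cp}$.

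I expect the main obstacle to be the surjectivity step, i.e. proving that the inverse limit of the dual-graph skeleta genuinely exhausts the pro-Berkovich space. This is where $\RNS$ must be used in its uniform, all-covers form, and where one must control how the skeleta of successive covers nest and how their metric realizations converge — the analogue, for the profinite étale fundamental group, of \cite[Thm. 3.10]{RNSMumf} for the tempered one. The second delicate point, on which the formal parts rest, is the functoriality of Mochizuki's reconstruction: that the transition maps on the $\G(\overline Y)$ are exactly the group-theoretically induced ones, including the correct behaviour at the nodes under admissible covers.
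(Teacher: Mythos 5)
Your overall strategy (reconstruct the tower of dual graphs via \cite[Thm 2.7]{HypCurve}, use $\RNS$ to see that skeleta exhaust the space, then descend) is the same as the paper's, but the key technical step is broken as written. You claim that the inclusions $\lvert \G(\overline Y)\rvert\hookrightarrow\lvert\overline Y\rvert$ are compatible with the transition maps and hence give a continuous injection $\varprojlim_{\overline Y}\lvert \G(\overline Y)\rvert\to\lvert\widetilde X_i\rvert_c$. They are not: for a deeper cover $\overline Y'\to\overline Y$, the image of $\lvert\G(\overline Y')\rvert$ in $\lvert\overline Y\rvert$ is strictly larger than $\lvert\G(\overline Y)\rvert$ precisely because new vertices appear inside residue disks --- and $\RNS$ guarantees this happens cofinally often. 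So whatever transition map $\lvert\G(\overline Y')\rvert\to\lvert\G(\overline Y)\rvert$ you use (necessarily involving the retraction onto the skeleton), the square with the two inclusions does not commute, a compatible system of skeletal points does not yield a compatible system in $\varprojlim\lvert\overline Y\rvert$, and your map is not defined. The natural comparison map goes the other way, $\lvert\widetilde X_i\rvert_c\to\varprojlim\lvert\G(\overline Y)\rvert$, via retractions; making that a homeomorphism, and above all making the transition maps \emph{group-theoretic} (a vertex of $\G(\overline Y')$ lying over the interior of a disk of $\overline Y$ has no image vertex or edge in $\G(\overline Y)$, so the map of graphs you invoke does not exist as such), is exactly the content you are missing. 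Your surjectivity argument (``compact and nested, so the inverse limit fills up $\lvert\widetilde X_i\rvert_c$'') conflates an increasing union with an inverse limit and proves nothing.

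The paper handles this by a different-looking construction: for each open $K\subset\Delta_i$ it builds $S(K)$ from conjugacy classes of vertex decomposition groups, realizing the interior of each edge $D\in\tilde E(K)$ as the Dedekind completion $\widehat A_D$ of the ordered set of vertices of all deeper covers whose decomposition groups sit inside $D$; the resulting spaces form a \emph{direct} system $\varinjlim_K S(K)$, identified with $\lvert\widetilde X\rvert_c$ by \cite[Lem. 3.9]{RNSMumf}. This is precisely the device that (a) defines all maps purely in terms of decomposition groups, so that $\alpha$ automatically intertwines them, and (b) accounts for the points (type $3$, and the type $1$/$4$ limit points) that never become vertices. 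To salvage your write-up you would need to either reproduce this ordered-set/completion argument or give a genuinely group-theoretic definition of your retraction-transition maps together with a proof of bijectivity of $\lvert\widetilde X_i\rvert_c\to\varprojlim\lvert\G(\overline Y)\rvert$; neither is present. The remaining points (cusps via inertia, $\alpha(\Delta_1)=\Delta_2$, uniqueness from density of skeletal points) match the paper and are fine.
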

\begin{proof}
Let $\Pi_i=\pi_1(X_i,\widetilde X_i)$, $\Delta_i=\pi_1(X_{i,\overline K_i},\widetilde X_i)\subset\pi_1(X_i,\widetilde X_i)$ and $G_i=\Pi_i/\Delta_i\simeq \Gal(\overline K_i/K_i)$.
Then $\alpha(\Delta_1)=\Delta_2$ by \cite[Lem. 1.1.4]{HypCurve} and therefore $\alpha$ induces an isomorphism $\alpha_G:G_1\to G_2$.

Let $\Delta_i^{(p')}$ be the maximal pro-prime-to-$p$ quotient of $\Delta_i$ and $\Pi_i^{(p')}=\Pi_i/\Ker(\Delta_i\to\Delta_i^{(p')}) $ be the maximal geometrically pro-prime-to-$p$ quotient of $\Pi_i$. Let $\alpha^{(p')}_{geom}:\Delta_1^{(p')}\to \Delta_2^{(p')}$ and $\alpha^{(p')}:\Pi_1^{(p')}\to \Pi_2^{(p')}$ be the morphisms induced by $\alpha$.

Let $H_1$ be an open subgroup of $\Pi_1$, and let $H_2=\alpha(H_1)$. Let $Y_1$ be the connected  finite étale cover of $X_1$ corresponding to $H_1$ and let $Y_2$ be the cover of $X_2$ corresponding to $H_2$. Denote by $L_i$ the field of constants of $Y_i$. Then cofinally on $Y_1$, both $Y_1/L_1$ and $Y_2/L_2$ have stable reduction. Denote by $\mathfrak Y_1$ and $\mathfrak Y_2$ the corresponding stable models endowed with the natural log-structure. According to \cite[Thm 2.7]{HypCurve}, $\alpha$ induces natural isomorphisms of log-special fibers $\alpha_{H_1,\log}:\mathfrak Y_{1,s}\to \mathfrak Y_{2,s}$. More precisely, when one identifies the prime-to-$p$ geometric log fundamental group $\ggeom(\mathfrak Y_{1,s})$ of $\mathfrak Y_{i,s}$ with $(H_i\cap\Delta_i)^{(p')}$ along the specialisation map, the morphism $\ggeom(\mathfrak Y_{1,s})\to\ggeom(\mathfrak Y_{1,s})$ induced by $\alpha_{H_1,\log}$ coincides with $\alpha^{(p')}_{H_1,geom}:(H_1\cap\Delta_1)^{(p')}\to (H_2\cap\Delta_2)^{(p')}$. In particular, one gets an isomorphism of the geometric dual graphs $\G_{Y_1}\to \G_{Y_2}$ such that if $z_1$ is an edge or a vertex in $\G_{Y_1}$ with image $z_2$ in $\G_{Y_2}$, then $\alpha^{(p')}_{H_1,geom}(D_{z_1})=D_{z_2}$ where $D_{z_i}$ is the decomposition group of $z_i$ in $(H_i\cap\Delta_i)^{(p')}$. The construction (and unicity) of $\bar \alpha$ is then perfectly similar to the one in \cite[Thm. 3.10]{RNSMumf}, that we now recall.

If $K_i$ is an open subgroup of $\Delta_i$, let $\tilde V(K_i)$ (resp. $\tilde E(K_i)$) be the set of decomposition groups of vertices of $\G_{Y_i}$ in $K_i^{(p')}$ where $Y_i\to X_i$ is the finite étale cover corresponding to $H_i$, where $H_i$ is any open subgroup of $\Pi_i$ such that $K_i=H_i\cap\Delta_i$ and $Y_i$ has stable reduction ($\tilde V(K_i)$ and $\tilde E(K_i)$ do not depend on the choice of $H_i$). Let $V(K_i)=\tilde V(K_i)/K_i$ and $E(K_i)=\tilde E(K_i)/K_i$ be the set of conjugation classes.

Let $K''_i\subset K'_i\subset K_i$ be three open subsets of $\Delta_i$ such that $K'_i$ and $K''_i$ are normal in $K_i$. let $j_{K''_i,K'_i}:(K''_i)^{(p')}\to (K'_i)^{(p')}$ be the induced map. Then for every $D\in\tilde V(K'_i)$ there exists a unique $D'\in\tilde V(K''_i)$ up to conjugation in $K_i$ such that $j_{K''_i,K'_i}(D')$ is open in $D$. This induces an injection $i_{K''_i,K'_i}:V(K'_i)/K_i\to V(K''_i)/K_i$.

Let $D\in \tilde E(K_i)$. Let $K'$ be a normal open subgroup of $K_i$ and let $j:K'^{(p')}\to K^{(p')}$ be the induced map. One denotes by 
\[A_{D,K'}:=\{D'\in \tilde V(K')| j(D')\textrm{ is conjugate to an open subgroup of }D\}/K_i\subset V(K')/K_i.\]

 If $K''\subset K'$, then $i_{K'',K'}$ maps $A_{D,K''}$ to $A_{D,K'}$, and let 
 \[A_D=lim_{K'} A_{D,K'}.\]
 
Let $D_1\in\tilde V(K_i)$ such that $D\subset D_1$ and let $D_2$ be the only other subgroup in $\tilde V(K_i)$ such that $D\subset D_2$.
 The full subgraph $\mbb G(A_{D,K'})$ of $\mbb G_{K'}/K_i$ with vertices \[A_{D,K'}\cup \{i_{K',K_i}(D_1),i_{K',K_i}(D_2)\}\] is a line , so that $A_{D,K'}$ is naturally an ordered set for which $D_1$ is the minimal element and $D_2$ is the maximal. If one exchanges the role of $D_1$ and $D_2$, the order is reversed.

The injective map $A_{D,K''}\to A_{D,K'}$ is increasing, so that one obtains an order on $A_D$, which is then, non-canonically, isomorphic to $\mbf Q\cap (0,1)$ as an ordered set.
Let $\widehat A_D$ be the Dedekind completion of $A_D$: $\widehat A_D$ is an ordered topological space non-canonically isomorphic to $[0,1]$ as an ordered set. One denotes by $e(D_1)$ the minimal element of $A_D$. As a set $\widehat A_D$ does not depend on $D_1$ but the order is reversed when one replace $D_1$ by $D_2$. One then defines the topological space
\[S(K_i):=\left(V(K_i)\coprod_{D\in E(K_i)}\widehat A_D\right)/\sim\]
where $\sim$ is generated by \[\forall D\subset D_1,\quad D_1\sim e(D_1).\]
Since $\widehat A_D$ is (a priori non canonically) homeomorphic to $[0,1]$, $S(K_i)$ is (a priori non canonically) homeomorphic to the geometric realization of $\mbb G(K_i)$.

Let $\widetilde B_i=\varinjlim_{K_i}S(K_i)$. Then one has a natural homeomorphism $|\widetilde X_1|_c\to \widetilde B_i$ (see \cite[Lem. 3.9]{RNSMumf}). Then the family of isomorphisms of graphs $\mbb G(K_1)\to\mbb G(K_2)$ induce a homeomorphism $\widetilde B_1\to\widetilde B_2$, and therefore one gets a homeomorphism $\tilde \alpha_c:|\widetilde X_1|_c\to |\widetilde X_2|_c$. It preserves cusps, for example by \cite{GphAnab}, so that it also induces a homeomorphism $\tilde \alpha:|\widetilde X_1|\to |\widetilde X_2|$.  The action of $\Pi_i$ on itself by conjugacy induce an action of $\Pi_i$ on $\widetilde B_i$, so that the homeomorphism $\varinjlim_{K_i}S(K_i)\to |\widetilde X_1|_c$ is $\Pi_i$-equivariant, and $\widetilde B_1\to\widetilde B_2$ is also equivariant.

The uniqueness is identical to \cite[Prop. 3.11]{RNSMumf}.

The map $\bar\alpha_{\C_p}$ is deduced from the fact that $\alpha(\Delta_1)=\Delta_2$.

\end{proof}

\section{The canonical metric}
Let $\G$ be a finite connected graph. A metric on $\G$ is just a map $l:E(\G)\to\R_{>0}$ where $E(\G)$ is the set of edges of $\G$. If $x,y$ are two vertices of $\G$, a path from $x$ to $y$ is a sequence $p=(x=x_0,e_1,x_1,\cdots,e_n,x_n=y)$ where for all $i$, $x_i$ is a vertex of $\G$ and $e_i$ is an edge of $\G$ abutting at $x_{i-1}$ and $x_i$. We then set $d(x,y)=\inf_p l(p)$ where the infimum goes through all paths from $x$ to $y$ and $l(p)=\sum_{i=1}^n l(e_i)$ if $p=(x=x_0,e_1,x_1,\cdots,e_n,x_n=y)$ is a path.

Let $X^{\log}/k^{\log}$ be a semistable log curve over a log point, and let $\G_X$ be the dual graph of $X$. Let us fix $\pi\in M_{k^{\log}}\backslash\{0\}$. Let $e$ be a node of $X^{\log}$, then $\overline M_{X,e}$ is isomorphic to $\overline M_{k^{\log}} \oplus \NN u\oplus \NN v )/(u+v=a)$ for some $a\in \overline M_{k^{\log}}\backslash\{0\}$. Note that $u$ and $v$ are characterized as the generators of the faces of $\overline M_{X,e}$ isomorphic to $\N$, so that, in particular, $a$ is preserved by automorphisms of $\overline M_{X,e}$. Then $a=\pi^r$ for a unique $r\in\Q_{>0}$, and one can set $l_{X^{\log},\pi}(e)=r$ to get a metric $d_{X^{\log},\pi}$ on $\G_X$.

If $X$ is an analytic $\bbC_p$-curve in the sense of Berkovich, the set $X_{[2]}$ of points of type 2 is endowed with a canonical metric (see \cite[Prop. 4.5.7]{ducroscurves}). If $x,y\in X_{[2]}$, let an annular path from $x$ to $y$ be a sequence $p=(x=x_0,A_1,x_1,A_2,\cdots, A_n,x_n=y)$, where for all $i$, $x_i\in X_2$ and $A_i$ is a annulus embedded in $X$ with endpoints $x_{i-1}$ and $x_i$. The length of the path $p=(x=x_0,A_1,x_1,A_2,\cdots, A_n,x_n=y)$ is then defined to be $l(p)=\sum_{i=1}^n r_{\mathrm{norm}}(A_i)$ and the distance between $x$ and $y$ is $d(x,y)=\inf_pl(p)$ where the infimum goes through all annular paths from $x$ to $y$. If $X$ is topologically simply connected, $d(x,y)=l(p)$ for all annular path $p=(x=x_0,A_1,x_1,A_2,\cdots, A_n,x_n=y)$ such that, for all $i\neq j$, $A_i\cap A_j=\emptyset$.

Let $\fkX$ be a proper semistable log curve over $\bbC_p$ with smooth generic fiber. Let $\pi:\fkX_\eta^{\an}\to \fkX_s$ be the specialisation map from the analytic generic fiber to the special fiber. Let $V(\fkX_s)$ be the set of generic points of $\fkX_s$, $N(\fkX_s)$ be the set of nodal points of $\fkX_s$ and $S(\fkX)=\pi^{-1}(V(\fkX_s))$. Then $\pi$ induces a bijection $S(\fkX)\to V(\fkX_s)$ and $S(\fkX)$ is a triangulation of $\fkX_\eta^{\an}$ in the sense of \cite[\S 5.1.13]{ducroscurves}. If $z\in N(\fkX_s)$, then $\pi^{-1}(z)$ is an open annulus and $r_{\mathrm{norm}}(\pi^{-1}(z))=l_{\fkX_s^{\log},p}(z)$. Moreover, the distance between two points of $S(\fkX)$ is reached by an annular path  $p=(x=x_0,A_1,x_1,A_2,\cdots, A_n,x_n=y)$ such that for all $i$ there exists $z_i\in N(\fkX_s)$ such that $A_i=\pi^{-1}(z_i)$. When one identifies $S(\fkX)$ and $V(\fkX_s)$ along $\pi$, $d_{\fkX^{\log}_s,p}$ and the restriction of the canonical distance $d_{\fkX^{\an}_{\eta}}$ to $S(\fkX)$ therefore coincide.

\begin{prop}
Keep the assumptions and the notations of proposition (\ref{BerkRecovery}). Then $\bar\alpha$ preserves the points of type $2$ and $3$ and preserves the canonical distance on points of type $2$: if $x,y\in X_{1,[2]}$, then $d(\bar\alpha(x),\bar\alpha(y))=d(x,y)$.
\end{prop}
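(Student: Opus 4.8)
The plan is to leverage the equivariant homeomorphism $\tilde\alpha_c$ from Proposition \ref{BerkRecovery} together with the explicit description, given just before this proposition, of how the canonical metric on type 2 points is encoded in the log-structure of the special fiber of the stable model. First I would address the preservation of the type of points. The homeomorphism $\bar\alpha_{\bbC_p}:\overline X^{\an}_{1,\bbC_p}\to\overline X^{\an}_{2,\bbC_p}$ is constructed so that it matches the triangulations coming from the stable models: the set $V(X_i)$ of skeletal (type 2) points is identified with the vertices of the dual graphs $\G_{Y_i}$ over all finite étale covers $Y_i$, and $\tilde\alpha_c$ is built precisely to carry the vertex/edge combinatorial data of $\mbb G(K_1)$ to that of $\mbb G(K_2)$. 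Hence $\bar\alpha_{\bbC_p}$ carries $\varinjlim_{K_1}V(K_1)$ onto $\varinjlim_{K_2}V(K_2)$, and by $\RNS$ every type 2 point becomes skeletal in some finite étale cover, so $\bar\alpha_{\bbC_p}$ preserves type 2 points. Type 3 points are then characterized purely topologically (as the points whose local topological behaviour differs from type 2 points, being neither branch points of the graph nor endpoints), and a homeomorphism preserves this; so type 3 points are preserved as well.

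Next I would establish the isometry statement. The key input is the identification, recalled in the paragraph preceding the proposition, between the canonical Berkovich distance $d_{\fkX^{\an}_\eta}$ restricted to the skeletal points $S(\fkX)$ and the combinatorial metric $d_{\fkX^{\log}_s,p}$ on the dual graph, where the length of an edge corresponding to a node $e$ is the rational number $r$ with $a=\pi^r$ in $\overline M_{X,e}$. Since Mochizuki's reconstruction \cite[Thm 2.7]{HypCurve} recovers the log-special fiber $\mathfrak Y_{i,s}$ with its log-structure group-theoretically, and $\alpha_{H_1,\log}$ is an isomorphism of log-special fibers, it respects the monoids $\overline M_{Y_i,e}$ and the distinguished generators $u,v$ and the element $a$. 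The only subtlety is the choice of uniformizer $\pi\in M_{k^{\log}}$ used to normalize: one must check that the value $r$ with $a=\pi^r$ is preserved, which follows because $\alpha_{H_1,\log}$ is compatible with the structural map to the base log point and hence matches the base uniformizers. I would then invoke the fact that the normalized radius $r_{\mathrm{norm}}(\pi^{-1}(z))$ of the annulus attached to a node $z$ equals $l_{\fkX^{\log}_s,p}(z)$, so the edge-lengths agree under $\bar\alpha_{\bbC_p}$.

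To pass from edge-lengths to the actual distance, I would use that the canonical distance between two skeletal points is realized by an annular path whose annuli are exactly the $\pi^{-1}(z_i)$ for nodes $z_i$, i.e.\ by a shortest path in the metric graph $\G_{Y_i}$. Since $\bar\alpha_{\bbC_p}$ induces an isomorphism of metric dual graphs preserving edge-lengths, it preserves these combinatorial shortest-path distances, and hence $d(\bar\alpha_{\bbC_p}(x),\bar\alpha_{\bbC_p}(y))=d(x,y)$ for all skeletal $x,y$. For general type 2 points (not skeletal in $X_i$ itself) one passes to a finite étale cover $Y_i$ in which both points become skeletal, using $\RNS$; the distance is computed upstairs after dividing by the ramification/degree normalization built into the canonical metric, and this normalization is intrinsic to the étale cover and thus matched by $\alpha$. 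Finally, the statement for $\bar\alpha$ on $\overline X_1^{\an}\to\overline X_2^{\an}$ rather than the $\bbC_p$-versions follows because the canonical metric and the type of a point are insensitive to the base change to $\bbC_p$, and $\bar\alpha$ is the quotient of $\bar\alpha_{\bbC_p}$ by the Galois actions identified via $\alpha_G$.

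\emph{The main obstacle} I expect is verifying that the normalizing uniformizer $\pi$ (equivalently, the element $a\in\overline M_{k^{\log}}$ and its exponent $r$) is genuinely preserved by the reconstructed log-isomorphism $\alpha_{H_1,\log}$, rather than merely preserved up to an unknown scalar; this is what guarantees an honest isometry and not just a scaling. One must check that the group-theoretic reconstruction of the log-special fiber fixes the base log-structure $M_{k^{\log}}$ compatibly on both sides, so that the ratios defining the edge lengths coincide on the nose, and that the cofinal choices of covers with stable reduction do not introduce incompatible rescalings.
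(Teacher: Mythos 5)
Your proposal is correct and follows essentially the same route as the paper: reduce to skeletal points via RNS and a Galois cover, identify the normalized annulus moduli with the log-structure edge lengths divided by the edge stabilizers, and use that Mochizuki's reconstruction of the log-special fibers matches these lengths (the normalization subtlety you flag is real and is handled in the paper by recovering the absolute ramification index group-theoretically). The only cosmetic difference is that the paper characterizes type 2 and type 3 points by the number of connected components of $\overline X^{\an}_i\backslash\{x\}$, a purely topological invariant preserved by any homeomorphism, whereas you route the type-2 case through the skeletal-point identification; both work.
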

\begin{proof}
In $\overline X^{\an}_1$, a point $x$ is of type 2 if and only if $\overline X^{\an}_1\backslash\{x\}$ has more than two connected components and is of type 3 if and only if $\overline X^{\an}_1\backslash\{x\}$ has exactly two connected component. In particular points of type 2 (resp. 3) are preserved by arbitrary automorphisms, and therefore by $\bar \alpha$.

Let $x,y\in X^{\an}_{1,[2]}$ and let $p=(x=x_0,A_1,x_1,A_2,\cdots, A_n,x_n=y)$ be an annular path. By resolution of non singularities, there exists a finite étale cover $f:Y_1\to X_1$ such that for every $i\leq n$, $f^{-1}(x_i)\cap V(Y_1)\neq \emptyset$. One can moreover assume that $f:Y_1\to X_1$ is a Galois cover, and let $G=\Gal(Y_1/X_1)$. Then, since $V(Y_1)$ is $G$-equivariant, for every $i$, $f^{-1}(x_i)\subset V(Y_1)$. Up to refining $p$, one may assume that one can lift $p$ to a path $(y_0,B_1,y_1,B_2, \cdots,B_n, y_n)$ of $Y_1$ such that $B_i=\spec_{Y_1}^{-1}(e_i)$ where $e_i$ is a node of the special fiber $\fkY_{1,s}$ of the stable model of $Y_1$. Then $r_{\mathrm{norm}}(A_i)=l_{\fkY_{1,s}^{\log}}(e_i)/\lvert \Stab_G(\tilde e_i)\rvert$, and $l(p)=\sum_i l_{\fkY_{1,s}^{\log}}(e_i)/\lvert \Stab_G(\tilde e_i)\rvert$. Let $Y_2$ be the corresponding cover of $X_2$. Since the isomorphism of graphs $\G_{Y_1}\to \G_{Y_2}$ is $G$-equivariant and actually comes from an isomorphism of log-schemes $\fkY_{1,s}^{\log}\to\fkY_{2,s}^{\log}$, one deduces that $\bar\alpha$ preserves the length of $p$, and therefore the metric.

\end{proof}

\begin{prop}\label{TypeConservation}
Keep the assumptions and the notations of proposition (\ref{BerkRecovery}). Then $\bar\alpha$ preserves the types of the points.
\end{prop}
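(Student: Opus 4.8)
The plan is to prove Proposition \ref{TypeConservation} by reducing everything to distinguishing type $1$ from type $4$ points, since Proposition (\ref{BerkRecovery}) already produces the homeomorphism $\bar\alpha$ and the previous proposition shows that type $2$ and type $3$ points are preserved (they admit a purely topological characterization via the number of connected components of the complement). Because $\bar\alpha$ is a homeomorphism, the set of type $1$ and type $4$ points together — namely the points $x$ with $\bar X_1^{\an}\setminus\{x\}$ connected, i.e.\ the ``leaves'' of the Berkovich space — is preserved as a whole. So the only real content is separating type $1$ (rigid, $\bbC_p$-valued) from type $4$ points within this class.

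First I would pass to the geometric curve over $\bbC_p$, working with $\bar\alpha_{\bbC_p}:\overline X^{\an}_{1,\bbC_p}\to\overline X_{2,\bbC_p}^{\an}$, since the type of a point is determined by the geometric picture and the equivariance of Proposition (\ref{BerkRecovery}) lets me transport the group-theoretic structure. The key tool announced in the introduction is the \emph{geometric decomposition group}: I would show that a type $4$ point is distinguished from a type $1$ point by the nontriviality of its decomposition group in a suitable $\Z_p$-quotient of $\Delta_i$. Concretely, a type $1$ ($\bbC_p$-)point lies on the boundary of an honest open disk, so every $\Z_p$-torsor of $X_{\bbC_p}$ that is topologically trivial splits there; by contrast, following the mechanism sketched in the introduction, a type $4$ point carries a $\Z_p$-torsor that does \emph{not} decompose because locally it arises from Kummer classes of invertible functions whose $\dlog$ is bounded, forcing the absence of $p^n$-th roots for large $n$. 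The reconstruction machinery of Proposition (\ref{BerkRecovery}), being built entirely from decomposition groups of vertices and edges in the prime-to-$p$ quotients and refined by passing to $\Z_p$-covers, then shows that this nonvanishing of the decomposition group is a group-theoretic invariant preserved by $\alpha$, hence by $\bar\alpha$.

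The steps, in order, are: (1) recall the topological characterization of types $2$ and $3$ to reduce to the leaves; (2) characterize type $1$ points group-theoretically as those whose image under the geometric decomposition-group construction is trivial, equivalently those where every topologically-trivial $\Z_p$-torsor splits; (3) dually characterize type $4$ points by the existence of a nonsplit such torsor, using the $\dlog$-boundedness argument to manufacture, inside a discal neighborhood, a $\Z_p$-torsor coming from a function without high $p$-power roots; (4) verify that $\alpha$ carries these characterizations across, using the equivariance of $\bar\alpha$ and the compatibility of $\alpha$ with the $\Z_p$-cover decomposition groups established in the proof of Proposition (\ref{BerkRecovery}).

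The main obstacle I anticipate is step (3): making precise and rigorous the claim that the decomposition group of a type $4$ point in an appropriate $\Z_p$-quotient is nontrivial. This requires a careful local analysis of $\HT_X$ and the Kummer map on a nested family of discal neighborhoods shrinking toward the type $4$ point, controlling the convergence of the $b_n^{p^n}$ as in the proof of Proposition involving $\HTv_1(X)$, and then arguing that the failure of a point to be type $1$ — i.e.\ the failure of $K(X)$ to embed isometrically into $\Cp$ — is exactly what obstructs splitting of the torsor along the branch ending at $x$. One must also ensure this decomposition-group datum is visible in the pro-system $\varprojlim_K S(K_i)$ of Proposition (\ref{BerkRecovery}), i.e.\ that it is recovered from the same combinatorial-metric package and is therefore genuinely preserved by $\bar\alpha$; reconciling the metric/log-structure characterization hinted at first in the introduction with the decomposition-group characterization is the delicate point.
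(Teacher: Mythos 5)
Your argument is correct in outline, but it is not the route the paper takes to prove this proposition: the paper's own proof distinguishes type $1$ from type $4$ points \emph{metrically}. It uses the fact (established in the preceding proposition) that $\bar\alpha$ preserves the canonical distance on type $2$ points, because that distance is recovered from the log-structure on the special fibers $\mathfrak X_{i,s}$ via \cite[Thm 2.7]{HypCurve} (up to a multiplicative constant fixed by the ramification index), and then invokes the characterization: $x$ is of type $1$ if and only if $d(x_0,x_n)\to\infty$ for every sequence of type $2$ points $x_n\to x$. Your decomposition-group approach --- type $1$ points have trivial geometric decomposition group while type $4$ points carry a non-split $\Z_p(1)$-torsor by the $\dlog$-boundedness estimate --- is precisely the alternative proof the paper develops later, in Proposition \ref{NonSplit} and Proposition \ref{AnabTypeProp}, and explicitly flags as ``another proof of proposition \ref{TypeConservation} that does not use the recovery of the log structure on the special fiber.'' The trade-off is real: the metric proof is short once the log-structure recovery and the distance-preservation proposition are in place, but depends on that heavy input; your route replaces it by the analytic estimate of Proposition \ref{NonSplit} (the hard step you correctly identify), needs the existence of a class $c$ with $\HT_X(c)\neq 0$ on the relevant covers, and gets the preservation under $\alpha$ for free from the equivariance of $\tilde\alpha_c$ (the stabilizer $D_{\tilde x}=\Stab(\tilde x)$ is carried to $D_{\tilde\alpha(\tilde x)}$ --- you do not need any extra ``compatibility with $\Z_p$-cover decomposition groups'' beyond this). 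That independence from the log-structure is exactly what lets the paper redeploy your argument in the purely geometric, tempered setting of Section 4, so your proposal is a valid and in fact more portable proof, just not the one given at this point in the text.
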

\begin{proof}
In $\overline X^{\an}_1$, a point $x$ is of type 2 if and only if $\overline X^{\an}_1\backslash\{x\}$ has more than two connected components and is of type 3 if and only if $\overline X^{\an}_1\backslash\{x\}$ has exactly two connected component. In particular points of type 2 (resp. 3) are preserved by arbitrary automorphisms, and therefore by $\bar \alpha$. We still need to distinguish points of type 1 from points of type 4. $(\overline X^{\an}_i)_{(2)}$ is endowed with a natural metric $d_i$ and a point $x$ of  $\overline X^{\an}_i$ is of type 1 if and only if for every sequence $(x_n)\in (\overline X^{\an}_i)_{(2)}^\NN$ that converges to $x$, $d(x_0,x_n)$ goes to infinity.
But the restriction of $d_i$ to $V(X_i)$ can be recovered (up to a multiplicative constant, which is determined by the absolute ramification index of $K_i$, which can be recovered from $\gf(X_i)$) by the log-structure on $\mathfrak X_{i,s}$. By doing so for every $\mathfrak Y_{i,s}$, one gets that for every $x,x'\in (\overline X^{\an}_1)_{(2)}$, $d_1(x,x')=d_2(\bar\alpha(x),\bar\alpha(x'))$. But a point $x$ of  $\overline X^{\an}_i$ is of type 1 if and only if for every sequence $(x_n)\in (\overline X^{\an}_i)_{(2)}^\NN$ that converges to $x$, $d(x_0,x_n)$ goes to infinity. Therefore, $\bar\alpha$ also preserves points of type 1.
\end{proof}

\begin{thm}
Let $X_1/K_1$ and $X_2/K_2$ be two p-adic hyperbolic curves satisfying rns and let $\widetilde X_i/\widetilde K_i$ be a universal cover of $X_1/K_1$. Let $\alpha:\pi_1(X_1,\widetilde X_1)\to\pi_1(X_2,\widetilde X_2)$ be an isomorphism of fundamental groups.
 Then $\alpha$ is induced by a unique isomorphism $\widetilde X_1\to\widetilde X_2$. 
\end{thm}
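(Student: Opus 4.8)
The plan is to deduce the statement from Mochizuki's criterion \cite[Thm 2.9]{AAG2}, which reduces the assertion to a purely group-theoretic matching of sections: it suffices to show that, for every finite extension $K'$ of $K_1$, the isomorphism $\alpha$ carries the partial Galois sections $G_{K'}\to\Pi_1$ coming from closed points $x\in X_1(K')$ onto those of $X_2$ coming from closed points, where I write $\Pi_i=\pi_1(X_i,\widetilde X_i)$, $\Delta_i=\pi_1(X_{i,\overline{K_i}},\widetilde X_i)$ and $G_i=\Pi_i/\Delta_i$ as in the proof of Proposition \ref{BerkRecovery}. The first step is to recall from that proposition the $\alpha$-equivariant homeomorphism $\tilde\alpha_c:|\widetilde X_1|_c\to|\widetilde X_2|_c$ and to note that equivariance translates directly into compatibility of stabilizers: for every $\tilde x\in|\widetilde X_1|_c$ one has $\alpha(\Stab_{\Pi_1}(\tilde x))=\Stab_{\Pi_2}(\tilde\alpha_c(\tilde x))$.

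Next I would translate both sides of the criterion into conditions on points. A non-cuspidal closed point $x\in X_i(K')$ determines, after choosing a lift $\tilde x\in|\widetilde X_i|_c$, a decomposition group $D_{\tilde x}=\Stab_{\Pi_i}(\tilde x)$ whose image under $\Pi_i\to G_i$ is the open subgroup $G_{K'}$, and the associated section is precisely the point-theoretic section attached to $x$. The key is then to characterize, among all type $1$ points of $\overline X_{i,\bbC_p}^{\an}$, those that are rigid, i.e. lie in $\overline X_i(\overline{K_i})$. For a non-cuspidal type $1$ point the geometric decomposition group $D_{\tilde x}\cap\Delta_i$ is trivial, so $D_{\tilde x}$ injects into $G_i$ and its image is the stabilizer of the underlying $\bbC_p$-point for the natural action of $G_i=\Gal(\overline{K_i}/K_i)$ on $\overline X_{i,\bbC_p}^{\an}$. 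By the Ax--Sen--Tate theorem one has $\bbC_p^{G_{K'}}=K'$ for every finite extension $K'$, so a $\bbC_p$-point is fixed by an open subgroup of $G_i$ exactly when its coordinates are algebraic over $K_i$; hence $x$ is rigid if and only if the image of $D_{\tilde x}$ in $G_i$ is open.

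With this characterization the conclusion follows formally. By Proposition \ref{TypeConservation} the homeomorphism $\tilde\alpha_c$ (and the induced $\bar\alpha_{\bbC_p}$ on the quotient by $\Delta_i$) preserves type $1$ points, and, being compatible with cusps, preserves non-cuspidal type $1$ points. Since $\alpha$ induces the isomorphism $\alpha_G:G_1\to G_2$, which preserves open subgroups, the relation $\alpha(D_{\tilde x})=D_{\tilde\alpha_c(\tilde x)}$ shows that $D_{\tilde x}$ has open image in $G_1$ if and only if $D_{\tilde\alpha_c(\tilde x)}$ has open image in $G_2$. Therefore $\tilde\alpha_c$ interchanges the rigid non-cuspidal type $1$ points, so $\alpha$ matches the point-theoretic sections of $X_1$ with those of $X_2$. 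Applying \cite[Thm 2.9]{AAG2} produces an isomorphism $\widetilde X_1\to\widetilde X_2$ inducing $\alpha$; uniqueness follows because any such isomorphism must realize on the Zariski-dense set of rigid points the bijection already forced by $\bar\alpha$.

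The step I expect to be the main obstacle is the rigidity characterization of the second paragraph, namely making precise that the abstract group-theoretic decomposition group $D_{\tilde x}\subset\Pi_i$ coincides with the geometric stabilizer of the corresponding Berkovich point and that its image in $G_i$ is genuinely the $\Gal(\overline{K_i}/K_i)$-stabilizer of the $\bbC_p$-point. This rests on the fact that type $1$ points have trivial geometric decomposition group---in sharp contrast with type $4$ points, whose geometric decomposition group is non-trivial---together with a careful comparison of the pro-\'etale decomposition groups with the topological stabilizers produced in Proposition \ref{BerkRecovery}. Once these identifications are secured, the Ax--Sen--Tate input closes the argument.
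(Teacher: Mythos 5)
Your proposal is correct and follows essentially the same route as the paper: both use the equivariance of $\tilde\alpha_c$ to identify decomposition groups with stabilizers, the preservation of type $1$ points from Proposition \ref{TypeConservation}, and the characterization of rigid points among type $1$ points by openness of the image of $D_{\tilde x}$ in $G_i$ (the Ax--Sen--Tate input you make explicit), before concluding via \cite[Thm 2.9]{AAG2}. Your extra care about matching the pro-\'etale decomposition group with the topological stabilizer, and about cusps, is a reasonable elaboration of what the paper leaves implicit, not a different argument.
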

\begin{proof}
Let $\tilde x_1$ be a closed point of $\tilde X_1$. Then the decomposition group $D_{\tilde x_1}$ is the stabilizer of $\tilde x_1$ for the action of $\gf(X_1)$ on $\widetilde X_1|_c$. Therefore, the equivariance of $\bar\alpha$ tells us that $\alpha(D_{\tilde x_1})=D_{\bar\alpha(\tilde x_1)}$. According to Prop. \ref{BerkRecovery}, $\bar\alpha(\tilde x_1)$ is of type 1, so that $\cH(x_1)$ is a closed subfield of $\bbC_p$. Since $k(x_1)$ is a finite extension of $\Q_p$, the image of $D_{\tilde x_1}$ inside $G_{K_1}$ is open, therefore $\alpha(D_{\tilde x_1})=D_{\bar\alpha(\tilde x_1)}$ maps to an open subgroup of $G_{K_2}$, so that $\cH(x_2)$ is a finite extension of $\Q_p$. Therefore  $\alpha(D_{\tilde x_1})$ is the decomposition group of a closed point of  $\tilde X_2$. The result then follows from \cite[Thm 2.9]{AAG2}.
\end{proof}

\section{Geometric characterization of the type of points}
Let $\C$ be a algebraically closed complete non archimedean field of mixed characteristics $(0,p)$.
\begin{prop}
 Let $D=\cM(\C\{T\})$ be a disk; let $\eta$ be its Gauss point, and let $c\in H^1(D,\Z_p(1))=\Hom(\get(D),\Z_p(1))$. For $n\geq 1$, let $f_n:Y_n\to D$ be the corresponding $\mu_{p^n}$-torsor, so that $\get(Y_n)=c^{-1}(p^n\Z_p(1))\subset\get(D)$.
\begin{enumerate}
\item $\lvert\frac{HT_D(c)}{dT}\rvert_{\eta}\leq 1$;
\item For every $n\geq 1$, if $f_n$ splits over $\eta$, then 
$\lvert\frac{HT_D(c)}{dT}\rvert_{\eta}\leq \frac{1}{p^n}$.
\end{enumerate}
\end{prop}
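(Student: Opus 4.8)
The plan is to compute the Hodge--Tate map $\HT_D$ explicitly on the disk $D=\cM(\C\{T\})$ and exploit the fact that the ring of bounded functions controls everything at the Gauss point $\eta$. First I would observe that since $H^1(D,\Gm)=0$ (the disk is a Stein-like space with trivial Picard group), the class $c_n\in H^1(D,\mu_{p^n})$ obtained from $c$ by the surjection $\Z_p(1)\to\mu_{p^n}$ lies in the image of the Kummer map, so $c_n=\kappa_n(a_n)$ for some $a_n\in\cO^\times(D)$. As in the proof that $\HTv_1$ implies $\RNS$, one can arrange a compatible system with $a_n=b_n^{p^n}a_{n+1}$ and renormalize so the $a_n$ converge to a unit $a\in\cO^\times(D)$ with $c=\kappa(a)$; then by the commutative diagram defining $\phi$ one has $\HT_D(c)=\dlog(a)=\frac{da}{a}$. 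The key quantitative input is that $\lvert\frac{da}{a}/dT\rvert_\eta=\lvert a'/a\rvert_\eta\leq 1$: indeed $a$ is a unit on the disk, so $\lvert a\rvert_\eta$ is the spectral norm of a nowhere-vanishing power series, while $a'$ has Gauss norm at most $\lvert a\rvert_\eta$ by the ultrametric estimate on the coefficients of $a=\sum a_iT^i$ (each $\lvert i a_i\rvert\le\lvert a_i\rvert$). This gives part (i).

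For part (ii), I would extract a finer estimate under the splitting hypothesis. Saying that $f_n\colon Y_n\to D$ splits over $\eta$ means the restriction $c_n\restr{\eta}$ is trivial in $H^1(\calH(\eta),\mu_{p^n})$, equivalently that the image of $a$ in $\calH(\eta)^\times$ is a $p^n$-th power modulo the appropriate Kummer obstruction; concretely $a$ becomes a $p^n$-th power in the completed residue field at $\eta$, so we may write $a\equiv g^{p^n}$ there with $g$ a unit. The effect on the logarithmic derivative is that $\dlog(a)=p^n\,\dlog(g)$ at $\eta$, and since $g$ is again a unit with $\lvert\dlog(g)/dT\rvert_\eta\le 1$ by the same Gauss-norm estimate, one gets $\lvert\HT_D(c)/dT\rvert_\eta=p^n\lvert\dlog(g)/dT\rvert_\eta$, which naively goes the wrong way. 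The correct reading is that splitting forces $a$ to be congruent to $1$ to high $p$-adic precision: the unit $a$ normalized with $a(\eta)$-value $1$ and $\lvert a-1\rvert_\eta$ small, combined with triviality of the $p^n$-torsor, forces $\lvert a-1\rvert_\eta\le p^{-n}$ type bounds on the non-constant part, whence $\lvert a'/a\rvert_\eta=\lvert a'\rvert_\eta\le p^{-n}$.

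The main obstacle I anticipate is making the second step precise: one must carefully translate the cohomological splitting of the $\mu_{p^n}$-torsor over the single type-2 point $\eta$ into an analytic statement about the power series $a$, controlling not the value $a(\eta)$ but the Gauss norm of $a'$. The cleanest route is probably to work with the Kummer description directly: splitting over $\eta$ says $a$ admits a $p^n$-th root in $\calH(\eta)$, but a unit power series on a disk admitting a $p^n$-th root in the completed residue field at the Gauss point must itself be a $p^n$-th power of a unit on $D$ (because the Gauss point is the generic point and the disk has trivial geometry), so $a=h^{p^n}$ with $h\in\cO^\times(D)$; then $\dlog(a)=p^n\dlog(h)$ and, writing $h=\sum h_iT^i$, the estimate $\lvert\dlog(h)/dT\rvert_\eta\le p^{-n}\cdot\lvert\dlog(a)/dT\rvert_\eta$ is \emph{not} what we want—rather one inverts the relation to conclude $\lvert\HT_D(c)/dT\rvert_\eta\le p^{-n}$ by noting the total $\HT_D(c)=\dlog(a)$ is divisible by $p^n$ in the relevant integral structure $R^1\nu_*\Z_p(1)$ before passing to differentials. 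I would lean on the integrality of $\kappa$ and the compatibility of the diagram with the $\mu_{p^n}$-quotients, citing \cite[Lem. 3.24]{scholzeCDM}, so that the vanishing of $c$ modulo $p^n$ at $\eta$ is exactly the assertion $\HT_D(c)\in p^n\cdot(\text{lattice})$ at $\eta$, giving the bound directly.
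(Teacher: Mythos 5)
Your skeleton for (ii) --- reduce to the case where $a$ is a $p^n$-th power of a unit and then extract a factor of $p^n$ from the logarithmic derivative --- matches the paper's, but the two steps that carry the actual content are respectively missing and miscomputed. The crux of (ii) is exactly the claim you dispose of with ``because the Gauss point is the generic point and the disk has trivial geometry'': that splitting of the $\mu_{p^n}$-torsor over the single point $\eta$ forces $f_n$ to be trivial over all of $D$. This is not formal --- $H^1(D,\mu_{p^n})\neq 0$, since a unit $1+g$ with $\lvert g\rvert_\eta<1$ on the closed disk need not admit a $p^n$-th root --- and the paper proves it geometrically: splitting at $\eta$ propagates to a neighbourhood of $\eta$, which contains an annulus $\{r\leq\lvert T\rvert\leq 1\}$; gluing $f_n$ with the trivial torsor on the complementary disk at infinity yields a $\mu_{p^n}$-torsor on $\bbP^1$, which is trivial, hence so is $f_n$. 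You need some argument of this kind; nothing in your text supplies it. Second, once $a=h^{p^n}$ with $h\in\cO^\times(D)$, the conclusion is immediate: $\dlog(a)=p^n\dlog(h)$, $\lvert\dlog(h)/dT\rvert_\eta\leq 1$ by the estimate of part (i), and $\lvert p^n\rvert=p^{-n}$, so $\lvert\HT_D(c)/dT\rvert_\eta\leq p^{-n}$. Your write-up twice evaluates $\lvert p^n x\rvert$ as $p^n\lvert x\rvert$ instead of $\lvert p^n\rvert\,\lvert x\rvert=p^{-n}\lvert x\rvert$, concludes that the estimate ``goes the wrong way'', and then retreats to an unproved appeal to integral structures; the direct computation already gives the bound and no inversion is needed. (The paper phrases this same step via pulling back $c$ to the trivial cover and restricting along a section, which is your $a=h^{p^n}$ in cohomological clothing.)

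In part (i) there is a smaller gap: you assume $c=\kappa(a)$ for a single unit $a$ on all of $D$, justified by convergence of the $a_n$. That convergence argument (as in the paper's proof that $\HTv_1$ implies $\RNS$) only works after shrinking the disk, because the bounds $\lvert b_n-1\rvert<1$ need not be uniform in $n$ up to the boundary. The paper instead applies the Kummer argument on each proper closed subdisk, obtaining $\lvert\HT_D(c)/dT\rvert_{\eta_r}\leq 1/r$ at the Gauss point of the subdisk of radius $r$, and lets $r\to 1$. Your Gauss-norm estimate $\lvert a'\rvert_\eta\leq\lvert a\rvert_\eta$ is correct and is the same computation as the paper's; only the reduction to the Kummer case needs this extra limiting step.
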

\begin{proof}
\begin{enumerate}
\item 
If $c=\kappa(f)$ with $f\in O^\times(D)$, then $HT_D(c)=\frac{df}{f}$. Up to multiplying $f$ by a constant in $\C$, one can assume that $\lvert f\rvert_{\eta}=1$. Then $\lvert\frac{HT_D(c)}{dT}\rvert_{\eta}=\lvert\frac{df}{dT}\rvert_{\eta}\leq 1$.

In general, the restriction of $c$ to any proper closed disk is in the image of the Kummer map. In particular, if $z$ is the Gauss point of a closed disk of radius $r\in (0,1)\cap \lvert \C^\times\rvert$, one deduce from the previous argument applied to the parameter $a^{-1}T$ where $\lvert a\rvert=r$ that $\lvert\frac{HT_D(c)}{dT}\rvert_{\eta_r}\leq \frac{1}{r}$. When $r$ converges to 1, one gets that $\lvert\frac{HT_D(c)}{dT}\rvert_{\eta}\leq 1$.
\item Assume $f_n$ splits over $\eta$. Then it splits over a neighborhood of $\eta$, which contains a small annulus $\{r\leq \lvert T\rvert\leq 1\}$. By gluing with the trivial $\mu_{p^n}$-torsor over the disk  $\{0\leq \lvert \frac{1}{T}\rvert\leq \frac{1}{r}\}$ centered at infinity, one obtains a $\mu_{p^n}$-torsor over $\bbP^1$, which is necessarily trivial. Therefore $f_n$ is a trivial $\mu_{p^n}$-torsor.

The pullback $f_n^*c\in \Hom(\ga(Y_n),\Z_p(1))$ has its image in $\Hom(\ga(Y_n),p^n\Z_p(1))$, so there exists $c'\in \Hom(\ga(Y_n),\Z_p(1))$ such that $f_n^*c=p^nc'$, and so $f_n^*HT_{Y_n}(c)=p^nHT_{Y_n}(c')$. If $f_n$ is a trivial cover, let $s:D\to Y_n$ be a section of $f_n$. Then $HT_D(c)=p^nHT_D(s^*c')$ and therefore $\lvert\frac{HT_D(c)}{dT}\rvert_{\eta}=\frac{1}{p^n}\lvert\frac{HT_D(s^*c')}{dT}\rvert_{\eta}\leq \frac{1}{p^n}$.

\end{enumerate}
\end{proof}

\begin{prop}\label{NonSplit}
Let $X$ be a hyperbolic curve over $\C$. Let $x\in X^{\an}$ be a type 4 point. Let $c\in Hom(\get(X),\Z_p(1))$ such that $\HT_X(c)\neq 0$. Then $c$ is not split over $x$: its restriction $c_{D_x}\in \Hom(D_x,\Z_p(1))$ to the decomposition group $D_x$ is non-zero.

\end{prop}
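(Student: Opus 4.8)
The plan is to argue by contradiction: I assume $c_{D_x}=0$ and deduce $\HT_X(c)=0$. I would first exploit the local structure of a type $4$ point. Such an $x$ admits a cofinal system of closed-disk neighbourhoods $D_1\supset D_2\supset\cdots$, each containing $x$, whose radii $r_m$ (with respect to a fixed coordinate $T$ on $D_1$) decrease to a limit $r_\infty$. The key preliminary observation is that $r_\infty>0$: if the radii tended to $0$, the centres would form a Cauchy sequence in the complete field $\C$, their limit would lie in every $D_m$, and $x$ would be a rigid point of type $1$ rather than type $4$. Choosing the $D_m$ with radii in $\lvert\C^\times\rvert$, each one is $D_m\simeq\cM(\C\{S_m\})$ for the normalized coordinate $S_m=(T-a_m)/b_m$ with $\lvert b_m\rvert=r_m$.

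Next I would translate the group-theoretic hypothesis $c_{D_x}=0$ into a splitting statement. Writing $c_n$ for the image of $c$ in $\Hom(\get(X),\mu_{p^n})$ and $f_n:Y_n\to X$ for the associated $\mu_{p^n}$-torsor, the equality $c_{D_x}=0$ gives $c_n|_{D_x}=0$ for every $n$; hence $x$ splits completely in $Y_n$, and $f_n$ becomes trivial over a small enough disk neighbourhood of $x$, i.e.\ over some $D_{m(n)}$. After restricting to a yet smaller basis disk I may assume $m(n)\to\infty$. In particular $f_n$ splits over the Gauss point $\eta_{D_{m(n)}}$.

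The heart of the argument is then quantitative. Applying the second part of the preceding proposition to the restriction $c|_{D_{m(n)}}$ (and using that $\HT$ is compatible with restriction, so that $\HT_X(c)|_{D_{m(n)}}=HT_{D_{m(n)}}(c|_{D_{m(n)}})$) yields $\bigl\lvert \HT_X(c)/dS_{m(n)}\bigr\rvert_{\eta_{D_{m(n)}}}\le p^{-n}$. Writing $\HT_X(c)=g\,dT$ on $D_1$ and using $dT=b_{m(n)}\,dS_{m(n)}$, this reads $r_{m(n)}\,\lvert g\rvert_{\eta_{D_{m(n)}}}\le p^{-n}$, so that
\[
\lvert g\rvert_{\eta_{D_{m(n)}}}\ \le\ \frac{1}{r_\infty\,p^{n}}\ \xrightarrow[n\to\infty]{}\ 0.
\]
On the other hand $\HT_X(c)\neq 0$ forces $g\not\equiv 0$; since a nonzero $g$ has only finitely many zeros in each relatively compact subdisk, none of them equal to the non-classical point $x$, the function $g$ has no zeros on $D_m$ for $m$ large, hence is invertible there, so $\lVert g\rVert_x=\lim_m\lvert g\rvert_{\eta_{D_m}}>0$. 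As $\eta_{D_{m(n)}}\to x$ and $z\mapsto\lvert g\rvert_z$ is continuous, $\lvert g\rvert_{\eta_{D_{m(n)}}}\to\lVert g\rVert_x>0$, contradicting the displayed bound. Therefore $c_{D_x}\neq 0$.

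I expect the main obstacle to lie in the second paragraph: the passage from the purely group-theoretic condition $c_{D_x}=0$ to an honest trivialisation of $f_n$ over a disk $D_{m(n)}$ (identifying $D_x$ with the inverse limit of the local fundamental groups of the $D_m$ and controlling the cofinality so that $m(n)\to\infty$), together with the positivity $r_\infty>0$. This positivity is exactly what separates type $4$ from type $1$: for a type $1$ point the radii shrink to $0$, the factor $1/(r_\infty p^n)$ would no longer tend to $0$, and indeed the torsor is allowed to split there — so the whole contradiction hinges on the radius being bounded away from $0$.
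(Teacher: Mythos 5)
Your argument is correct and is essentially the paper's own proof run in contrapositive form: both hinge on part (2) of the preceding proposition to force any disk over which $f_n$ splits to have radius $O(p^{-n})$, combined with the fact that the radii of disks shrinking to a type~$4$ point stay bounded below by some $r_\infty>0$ while $\lvert \HT_X(c)/dT\rvert$ stays bounded away from $0$ near $x$. The step you flag as the main obstacle (splitting of $c_{D_x}$ implying trivialization of $f_n$ over a small disk neighbourhood, hence over its Gauss point) is exactly the implicit step the paper also uses, so there is no gap relative to the published argument.
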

\begin{proof}
Let $f_n:Y_n\to X$ be the $\mu_{p^n}$-torsor induced by $c$.

The point $x$  is contained in a closed disk. Since $\HT(c)$ vanishes in $D$ only at finitely many points of type 1, one can assume after replacing $D$ by a smaller disk that $\HT(c)$ does not vanish on $D$. Let $T$ be a parameter on $D$.
Then $\frac{HT(c)}{dT}$ has constant norm $\lambda$ on $D$. Let us normalize the radii of subdisks of $D$ such that $r(\{\lvert T-a\rvert\leq \alpha\})=\alpha$. 
Then if $f_n:Y_n\to X$ is split over a strict closed disk $D$ then $r(D)\leq \frac{\lambda}{p^n}$.
Then there exists a decreasing sequence of strict closed disks $(D_k)_{k\in\N}$ in $D$ such that $x=\lim_{k}\eta_{D_k}$ where $\eta_{D_k}$ is the Gauss point of $D_k$. Since $x$ if of type 4, $r:=\lim_k r(D_k)>0$. In particular, there exists $n\in\N$ such that $\frac{\lambda}{p^n}<r$. Therefore, for every $k\in \bbN$, $\frac{\lambda}{p^n}<r(D_k)$, so that $Y_n\to X$ does not split over $x_k$. Therefore it does not split over $x$ either. In particular $c_{D_x}\neq 0$.
\end{proof}
This criterion gives another proof of proposition \ref{TypeConservation} that does not use the recovery of the log structure on the special fiber. This allows to deduce the following purely geometric analogue of \ref{TypeConservation}.

\begin{dfn}
Let $\Pi$ be a topological group.

 One denotes by $\RNS(\Pi)$ the following property: there exists a hyperbolic curve $X$ over an algebraically closed complete non-archimedean field $\C$ of mixed characteristics $(0,p)$ which satisfies resolution of non-singularity and an isomorphism $a:\Pi\to \gtemp(X,\widetilde X)$. 

Such an isomorphism $a:\Pi\to \gtemp(X,\widetilde X)$ will be called a geometric setting of $\Pi$.
\end{dfn}
Let $\Pi$ be a topological group such that $\RNS(\Pi)$ is true. By \cite[Th. 3.39]{gaulhiac}, one has a group-theoretical graph that recovers the graph of the stable reduction in the same way as in \cite{GphAnab} over discretely valued fields. More precisely, one sets $\G(\Pi)$ to be the graph whose vertices are conjugacy classes of maximal compact subgroups of $\Pi^{\mathrm{prime-to-}p}$ and whose edges are conjugacy classes of non-trivial intersections of two distinct maximal compact subgroups of $\Pi^{\mathrm{prime-to-}p}$. Then any geometric setting $a:\Pi\to\gtemp(X)$ induces an isomorphism $\G(\Pi)\to\G(X)$.

Then, the construction of \cite[\S 3.3]{RNSMumf} (or the construction in the proof of \ref{BerkRecovery}) extends without modification to this context (the only slight difference is that in the notations of  \cite[\S 3.3]{RNSMumf}, $A_e$ is no longer isomorphic to $\Q\cap (0,1)$ in general, but to $v(\C^\times)\cap (0,1)$ where $v(\C^\times)$ is the value group of $\C$): one obtains a topological space $\widetilde X_c(\Pi)$ endowed with a continuous action of $\Pi$, and for every geometric setting $a:\Pi\to\gtemp(X,\widetilde X)$ a unique equivariant homeomorphism $\tilde a_c:\widetilde X_c(\Pi)\to\lvert \widetilde X^{\an}\rvert_c$.

Let $ \overline X(\Pi)=\widetilde X_c(\Pi)/\Pi$. If ${\tilde x}\in\widetilde X_c(\Pi)$, let $D_{\tilde x}=\Stab_{\Pi}(\tilde x)$. If $x\in \overline X(\Pi)$, let $D_x=D_{\tilde x}$, where $\tilde x$ is a preimage of $x$ in $\widetilde X(\Pi)$, so that it is only well-defined up to conjugation. If $\Pi=\gtemp(X)$, then $D_x$ coincides with the image of the outer morphism $G_{\cH(x)}\to \Pi_X$ induced by the morphism of analytic spaces $\calM(\cH(x))\to X^{\an}$, where $G_{\cH(x)}$ is the absolute Galois group of $\cH(x)$. 

Cusps of $\overline X(\Pi)$ are end-points $x$ of $\overline X(\Pi)$ (in the sense that the set of  open neighborhoods $U$ of $x$ such that $U\backslash\{x\}$ is connected are cofinal among open neighborhoods of $x$) such that $D_x$ is isomorphic to $\widehat{\Z}$, and we denote by $X(\Pi)=\overline X(\Pi)\backslash \{\mathrm{cusps}\}$. Then for every geometric setting $a:\Pi\to\gtemp(X,\widetilde X)$, $\tilde a_c$ restricts to a homeomorphism $X(\Pi)\to \lvert X^{\an}\rvert$ (it follows for example from the following proposition \ref{AnabTypeProp}).

\begin{dfn}\label{AnabTypeDef}
Let $x\in X(\Pi)$. Let $l$ be a prime number different from $p$. Then $x$ is said to be of type:
\begin{itemize}
\item 1 if $D_x=\{1\}$;
\item 2 if $D_x$ has a non abelian pro-$l$ subquotient;
\item 3 if $D_x$ has a non trivial pro-$l$ subquotient but every pro-$l$ subquotient of $D_x$ is abelian.
\item 4 if $D_x$ is a non-trivial pro-$p$ group.
\end{itemize}
\end{dfn}
\begin{prop}\label{AnabTypeProp}
For any geometric setting $a:\Pi\to \gtemp(X)$, the isomorphism 
$\bar a:X(\Pi)\simeq\lvert X^{\an}\rvert$ preserves the types of points. In particular, the type of a point in definition \ref{AnabTypeDef} is independent from $l$.
\end{prop}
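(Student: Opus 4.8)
The plan is to establish a dictionary between the four group-theoretic conditions on $D_x$ in Definition \ref{AnabTypeDef} and the four classical Berkovich types, using the identification of $D_x$ with the image of $G_{\cH(x)}\to\Pi_X$. The key input is the explicit structure of the absolute Galois group $G_{\cH(x)}$ of the completed residue field $\cH(x)$ at a point $x$ of each type, together with the fact (Prop. \ref{TypeConservation}, and the alternative proof via Prop. \ref{NonSplit}) that the homeomorphism $\bar a$ already matches the Berkovich-theoretic type stratification. So it suffices to check, type by type, that $D_x$ satisfies exactly the asserted condition; the independence from $l$ is then automatic, since the Berkovich type is defined without reference to $l$.

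First I would handle types 1, 2, 3. For a type 1 point, $x$ is a $\Cp$-point, so $\cH(x)=\Cp$ is algebraically closed and $G_{\cH(x)}=\{1\}$, giving $D_x=\{1\}$. For a type 2 point, $\cH(x)$ has residue field of transcendence degree one over $\widetilde{\C}$, i.e. the function field of a curve over an algebraically closed field; its absolute Galois group (or rather the relevant decomposition group inside $\Pi$) surjects onto the geometric fundamental group of that residue curve, which is visibly non-abelian pro-$l$ (free of rank $\geq 2$ pro-$l$ once one passes to the positive-genus or multiply-punctured situation guaranteed by hyperbolicity and rns). For a type 3 point, $r(v)=1$ and $t(v)=0$: here $\cH(x)$ is a discretely-ranked-one valued field whose decomposition group is an extension of a procyclic tame part by a pro-$p$ inertia, so every pro-$l$ subquotient ($l\neq p$) is procyclic, hence abelian, but non-trivial. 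I would extract these Galois-structure statements from the classification of residue fields recalled in Section 1 (the $r(v),t(v)$ invariants) together with standard ramification theory for henselian valued fields.

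The main obstacle, and the genuinely new content, is the type 4 case: showing that $D_x$ is a non-trivial pro-$p$ group. Non-triviality is exactly Proposition \ref{NonSplit}: since $X$ is hyperbolic, $H^0(\overline X,\Omega^1)\neq 0$, so one can produce $c\in\Hom(\get(X),\Z_p(1))$ with $\HT_X(c)\neq 0$, and Prop. \ref{NonSplit} gives $c_{D_x}\neq 0$, whence $D_x\neq\{1\}$. For the pro-$p$ assertion, I would argue that at a type 4 point the residue field $\widetilde{\cH(x)}$ equals $\widetilde{\C}$ and the value group $v(\cH(x)^\times)$ equals $v(\C^\times)$ (as $r(v)=t(v)=0$), so $\cH(x)/\C$ is an immediate extension; the only ramification available over such a field is wild, forcing every prime-to-$p$ quotient of $G_{\cH(x)}$ to be trivial, hence $D_x$ is pro-$p$. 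Concretely, I would note that a prime-to-$p$ cover splitting is governed by Kummer theory with prime-to-$p$ roots of units, which converge and split (as in the proof of Prop. \ref{NonSplit}, where the obstruction was purely $p$-adic), so no prime-to-$p$ decomposition survives.

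Finally I would assemble these four computations: each Berkovich type is carried by $\bar a$ to a point whose $D_x$ meets precisely one of the four mutually exclusive and exhaustive conditions of Definition \ref{AnabTypeDef}. Since $\bar a$ preserves the Berkovich type (Prop. \ref{TypeConservation}) and the conditions partition the points, $\bar a$ preserves the group-theoretic type as well. The independence from the auxiliary prime $l$ is immediate, because the matching to the intrinsic Berkovich type does not involve $l$: whichever $l\neq p$ one chooses in Definition \ref{AnabTypeDef}, the resulting type agrees with the Berkovich type, hence with one another. I expect the delicate point to be the clean pro-$p$ statement for type 4, which is where the $\HT$-map and the convergence argument of Proposition \ref{NonSplit} do the essential work.
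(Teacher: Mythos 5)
Your treatment of types 1 and 4 is essentially the paper's: type 1 via $\cH(x)=\C$ being algebraically closed, and type 4 via Proposition \ref{NonSplit} for non-triviality together with the observation that $G_{\cH(x)}$ equals its wild inertia subgroup (algebraically closed residue field, divisible value group), hence is pro-$p$ --- a property that survives passage to the quotient $D_x$. (Two small framing slips: for a genus-zero hyperbolic curve $H^0(\overline X,\Omega^1)=0$, so the class $c$ with $\HT_X(c)\neq 0$ must be produced from the open curve; and the appeal to Proposition \ref{TypeConservation} is misplaced, since there is no prior ``Berkovich type'' on $X(\Pi)$ --- the correct and sufficient logic, which your type-by-type checks implement, is that the four conditions of Definition \ref{AnabTypeDef} are mutually exclusive, so one only needs one implication per type.)

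The genuine gap is in types 2 and 3, where you conflate $G_{\cH(x)}$ with $D_x$. Since $D_x$ is only the \emph{image} of $G_{\cH(x)}$ in $\Pi$, upper bounds (triviality, abelian-ness of pro-$l$ subquotients, pro-$p$-ness) transfer from $G_{\cH(x)}$ to $D_x$, and those parts of your argument are fine. But what types 2 and 3 require are \emph{lower} bounds --- a non-abelian, resp.\ non-trivial, pro-$l$ subquotient of $D_x$ --- and these do not follow from the structure of $G_{\cH(x)}$: its tame/geometric part could a priori die in $\Pi$. Concretely, if $x$ is a type 3 point contained in an embedded open disk $D\subset X^{\an}$, every finite \'etale cover of $X$ of prime-to-$p$ degree splits over $D$ (e.g.\ $\cO^\times(D)$ is $l$-divisible for $l\neq p$), so the Kummer class of an element witnessing $r(v)=1$ --- which is exactly what makes the tame quotient of $G_{\cH(x)}$ non-trivial --- need not come from any covering of $X$, and ``standard ramification theory'' says nothing about the image in $\Pi$. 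This is precisely where the paper uses the RNS hypothesis: it passes to a finite \'etale cover $Y\to X$ in which $x$ lifts to a vertex $y\in V(Y)$ (type 2), resp.\ to a point of the skeleton (type 3), and then identifies the image of $D_y$ in $\Pi_Y^{\mathrm{prime-to-}p}$ with $\get(U)^{\mathrm{prime-to-}p}$ for $U$ the hyperbolic smooth locus of the corresponding component of the stable reduction, resp.\ with $\widehat{\Z}(1)^{\mathrm{prime-to-}p}$ for an edge. You gesture at RNS for type 2 without explaining this mechanism, and for type 3 you do not invoke it at all; as written, the non-abelian-ness at type 2 points and the non-triviality at type 3 points are unproved.
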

\begin{proof}
From the definition \ref{AnabTypeDef}, it is clear that a point in $X(\Pi_Z)$ cannot be of two different types. It is therefore sufficient to show that if $x\in Z^{\an}$ is of a given type, $D_x\subset \Pi_Z$ satisfies the corresponding property in definition \ref{AnabTypeDef}.
\begin{enumerate}
\item Assume $x$ is of type $1$. Then $D_{x}=\{1\}$ because $\cH(x)=\C$ is algebraically closed.
\item Assume $x$ is of type $2$. By resolution of non-singularities, there exists a finite cover $f:Y\to Z$ and $y\in f^{-1}(\{x\})\cap V(Y)$. Then $D_y=D_x\cap\Pi_Y$ (for some choice of $D_x$ inside its conjugacy class), and the image of $D_y$ in $\Pi^{\mathrm{prime-to-}p}_Y$ is isomorphic to $\get(U)^{\mathrm{prime-to-}p}$ where $U$ is the smooth locus in the irreducible component of the special fiber of the stable model of $Y$ corresponding to the specialisation of $y$. Since $U$ is an hyperbolic curve over the residue field $\widetilde{\C}$ of $\C$, $D_y$ has a non abelian pro-$l$ quotient.

\item Assume $x$ is of type $3$. By resolution of non-singularities, there exists a finite Galois cover $f:Y\to Z$ and $y\in f^{-1}(\{x\})$ such that $y$ belongs to the skeleton of $Y^{\an}$. Then  the image of $D_y$ in $\Pi^{\mathrm{prime-to-}p}_Y$ is isomorphic to $\Z(1)^{\mathrm{prime-to-}p}$, and therefore has a non trivial pro-$l$ quotient.

Since $\cH(x)$ has an algebraically closed residue field, the absolute galois group $G_{\cH(x))}$ of $\cH(x)$ is equal to its inertia subgroup $I_{\cH(x))}$. Since the wild inertia subgroup $W_{\cH(x))}$ is a pro-$p$ group, every subquotient of $G_{\cH(x))}$ is isomorphic to a subquotient of $I_{\cH(x))}/W_{\cH(x))}$ and is therefore abelian (cf. \cite[\S 2.3.19]{ducroscurves}).
\item Assume $x$ is of type $4$. Then according to proposition \ref{NonSplit}, $D_x$ is non trivial. Since $D_x$ is a quotient of the absolute Galois group of $\cH(x)$, it is enough to show that its absolute galois group $G_{\cH(x))}$ is a $p$-group. Since the residue field of $\cH(x)$ is algebraically closed and its value group is divisible, $G_{\cH(x)}$ is equal to its wild inertia subgroup $W_{\cH(x)}$, which is a pro-$p$ group.

\end{enumerate}
\end{proof}

\bibliography{biblio}
\bibliographystyle{amsplain}

\end{document}